\newtheorem{Theorem}{Theorem}[section]
\newtheorem{Lemma}[Theorem]{Lemma}
\renewcommand{\phi}{\varphi}
\newcommand{\C}{\operatorname{C}}
\newcommand{\N}{\operatorname{N}}
\newcommand{\Z}{\operatorname{Z}}
\newcommand{\cohom}{\operatorname{H}}
\newcommand{\Aut}{\operatorname{Aut}}
\newcommand{\Out}{\operatorname{Out}}
\newcommand{\pcore}{\operatorname{O}}
\newcommand{\SL}{\operatorname{SL}}
\newcommand{\Irr}{\operatorname{Irr}}
\newcommand{\IBr}{\operatorname{IBr}}
\newcommand{\Gal}{\operatorname{Gal}}
\newcommand{\Ob}{\operatorname{Ob}}
\newcommand{\Hom}{\operatorname{Hom}}
\mathchardef\ordinarycolon\mathcode`\:  
\title{Blocks with defect group $D_{2^n}\ast C_{2^m}$}
\author{
Benjamin Sambale\\
Mathematisches Institut\\
Friedrich-Schiller-Universität\\
07743 Jena\\
Germany\\
{\tt benjamin.sambale@uni-jena.de}
}
\date{\today}
\begin{document}
\frenchspacing
\maketitle
\begin{abstract}\noindent
We determine the numerical invariants of blocks with defect group $D_{2^n}\ast C_{2^m}\cong Q_{2^n}\ast C_{2^m}$ (central product), where $n\ge 3$ and $m\ge 2$. As a consequence, we prove Brauer's $k(B)$-conjecture, Olsson's conjecture (and more generally Eaton's conjecture), Brauer's height zero conjecture, the Alperin-McKay conjecture, Alperin's weight conjecture and Robinson's ordinary weight conjecture for these blocks. Moreover, we show that the gluing problem has a unique solution in this case. This paper continues \cite{SambaleD2nxC2m}.
\end{abstract}

\textbf{Keywords:} $2$-blocks, dihedral defect groups, Alperin's weight conjecture, ordinary weight conjecture\\
\textbf{AMS classification:} 20C15, 20C20

\section{Introduction}
Let $R$ be a discrete complete valuation ring with quotient field $K$ of characteristic $0$. Moreover, let $(\pi)$ be the maximal ideal of $R$ and $F:=R/(\pi)$. We assume that $F$ is algebraically closed of characteristic $2$.
We fix a finite group $G$, and assume that $K$ contains all $|G|$-th roots of unity. Let $B$ be a $2$-block of $RG$ with defect group $D$. We denote the number of irreducible ordinary characters of $B$ by $k(B)$. These characters split in $k_i(B)$ characters of height $i\in\mathbb{N}_0$. Here the \emph{height} of a character $\chi$ in $B$ is the largest integer $h(\chi)\ge 0$ such that $2^{h(\chi)}|G:D|_2\mathrel{\big|}\chi(1)$, where $|G:D|_2$ denotes the highest $2$-power dividing $|G:D|$.
Finally, let $l(B)$ be the number of irreducible Brauer characters of $B$.

In \cite{SambaleD2nxC2m} we determined the invariants of $B$ in the case $D\cong D_{2^n}\times C_{2^m}$. In order to proceed with defect groups of the form $Q_{2^n}\times C_{2^m}$ it is necessary (for the induction step) to discuss central products  of the form $D_{2^n}\ast C_{2^m}$ first. Let
\[D:=\langle x,y,z\mid x^{2^{n-1}}=y^2=z^{2^m}=[x,z]=[y,z]=1,\ yxy^{-1}=x^{-1},\ x^{2^{n-2}}=z^{2^{m-1}}\rangle\cong D_{2^n}\ast C_{2^m},\]
where $n\ge 2$ and $m\ge1$. For $m=1$ we get $D\cong D_{2^n}$. Then the invariants of $B$ are known (see \cite{Brauer}). Hence, we assume $m\ge 2$. Similarly for $n=2$ we get $D=\langle y,z\rangle\cong C_2\times C_{2^m}$. Then $B$ is nilpotent and everything is known. Thus, we also assume $n\ge 3$. Then we have $D=\langle x,yz^{2^{m-2}},z\rangle\cong Q_{2^n}\ast C_{2^m}$.

The paper follows the lines of $\cite{SambaleD2nxC2m}$. However, the proof of the main theorem is a bit more complicated, since the upper bound for $k(B)$ in terms of Cartan invariants of major subsections is not sharp. Hence, it is necessary to consider generalized decomposition numbers and contributions. Here some of the calculations are similar to the quaternion case in \cite{Olsson}. Moreover, we introduce a new approach to construct a set of representatives for the conjugacy classes of subsections which uses only the fusion system of the block.

\section{Subsections}

The first lemma shows that the situation splits naturally in two cases according  to $n=3$ or $n\ge 4$.

\begin{Lemma}\label{aut}
The automorphism group $\Aut(D)$ is a $2$-group if and only if $n\ge 4$.
\end{Lemma}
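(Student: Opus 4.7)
The argument splits along the two cases $n=3$ and $n\ge 4$.

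For $n=3$ one exhibits an order-$3$ automorphism directly. Using the isomorphism $D\cong Q_8\ast C_{2^m}$ noted in the introduction, the familiar identification $\Aut(Q_8)\cong S_4$ supplies an order-$3$ automorphism $\alpha$ of $Q_8$ that cycles the generators $i,j,k$. Since $\alpha$ fixes $\Z(Q_8)=\{\pm1\}$ pointwise, extending it by the identity on $C_{2^m}$ gives a well-defined automorphism of the central product $D$ of order~$3$, so $\Aut(D)$ is not a $2$-group.

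For the converse direction $n\ge 4$, I would show that every $\phi\in\Aut(D)$ of odd order must be trivial. The centre $\Z(D)=\langle z\rangle\cong C_{2^m}$ is a characteristic cyclic $2$-group whose automorphism group is itself a $2$-group, so $\phi$ acts trivially on $\langle z\rangle$. Modulo this central factor the relation $x^{2^{n-2}}=z^{2^{m-1}}$ becomes trivial while $\bar x$ retains order $2^{n-2}\ge 4$ and is still inverted by $\bar y$; hence $D/\langle z\rangle\cong D_{2^{n-1}}$. Because $n-1\ge 3$, $\Aut(D_{2^{n-1}})$ is again a $2$-group, so $\phi$ also induces the identity on $D/\langle z\rangle$.

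Now $f(g):=\phi(g)g^{-1}$ takes values in the central subgroup $\langle z\rangle$. Centrality of the values implies that $f\colon D\to\langle z\rangle$ is a group homomorphism, and since $\phi$ is the identity on $\langle z\rangle$ one obtains inductively $\phi^k(g)=gf(g)^k$. If $\phi$ has odd order $k$, then $f(g)^k=1$ for every $g$; as $\langle z\rangle$ is a $2$-group and $k$ is odd, this forces $f\equiv 1$ and therefore $\phi$ is trivial.

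The main obstacle is the second reduction, namely both identifying $D/\langle z\rangle$ with $D_{2^{n-1}}$ and invoking that $\Aut(D_{2^{n-1}})$ is a $2$-group; both are exactly where the hypothesis $n\ge 4$ enters, since for $n=3$ one has $D/\langle z\rangle\cong V_4$ with $\Aut(V_4)=S_3$, restoring the order-$3$ symmetry that the $Q_8$ viewpoint above makes explicit.
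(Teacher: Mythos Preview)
Your proof is correct, but for the direction $n\ge 4$ it proceeds along a genuinely different path than the paper's. The paper argues via the Frattini quotient: it identifies a characteristic flag $\Phi(D)<\Phi(D)\Z(D)<\langle x,z\rangle<D$ (the last step using that $\langle x,z\rangle$ is the unique abelian maximal subgroup above $\Phi(D)\Z(D)$, which fails for $n=3$), so any automorphism induces an upper-triangular action on $D/\Phi(D)\cong C_2^3$; an odd-order automorphism therefore acts trivially on $D/\Phi(D)$ and is then killed by Burnside's basis theorem (Theorem~5.1.4 in Gorenstein). You instead pass to the quotient $D/\Z(D)\cong D_{2^{n-1}}$ and use that $\Aut(D_{2^{n-1}})$ is a $2$-group once $n\ge 4$, together with the fact that $\Aut(\Z(D))$ is a $2$-group, and finish with an explicit cocycle computation. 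Your route is a bit more elementary and self-contained (no appeal to Gorenstein), while the paper's flag argument generalises more readily and feeds directly into the later observation that $\Phi(D)\Z(D)$ remains characteristic even when $n=3$. For $n=3$ both arguments extract the order-$3$ automorphism from $\Aut(Q_8)\cong S_4$; you build it explicitly on the central product, whereas the paper first shows that any odd-order automorphism of $Q_8\times C_{2^m}$ fixes the centre and hence descends.
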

\begin{proof}
Since $\Aut(Q_8)\cong S_4$, we see that $\Aut(Q_8\times C_{2^m})$ is not a $2$-group. An automorphism of $Q_8\times C_{2^m}$ of odd order acts trivially on $(Q_8\times C_{2^m})'\cong C_2$ and on $\Z(Q_8\times C_{2^m})/(Q_8\times C_{2^m})'\cong C_{2^m}$ and thus also on $\Z(Q_8\times C_{2^m})$ by Theorem~5.3.2 in \cite{Gorenstein}. Hence, $\Aut(Q_8\ast C_{2^m})=\Aut(D_8\ast C_{2^m})$ is not a $2$-group.

Now assume $n\ge 4$. Then $\Phi(D)=\langle x^2,z^2\rangle<\Phi(D)\Z(D)=\langle x^2,z\rangle$ are characteristic subgroups of $D$. Moreover, $\langle x,z\rangle$ is the only abelian maximal subgroup containing $\Phi(D)\Z(D)$. Hence, every automorphism of $\Aut(D)$ of odd order acts trivially on $D/\Phi(D)$. The claim follows from Theorem~5.1.4 in \cite{Gorenstein}.
\end{proof}

It follows that the inertial index $e(B)$ of $B$ equals $1$ for $n\ge 4$. In case $n=3$ there are two possibilities $e(B)\in\{1,3\}$, since $\Phi(D)\Z(D)$ is still characteristic in $D$. Now we investigate the fusion system $\mathcal{F}$ of the $B$-subpairs. For this we use the notation of \cite{Olssonsubpairs,Linckelmann}, and we assume that the reader is familiar with these articles. Let $b_D$ be a Brauer correspondent of $B$ in $RD\C_G(D)$. Then for every subgroup $Q\le D$ there is a unique block $b_Q$ of $RQ\C_G(Q)$ such that $(Q,b_Q)\le(D,b_D)$. We denote the inertial group of $b_Q$ in $\N_G(Q)$ by $\N_G(Q,b_Q)$. Then $\Aut_{\mathcal{F}}(Q)\cong\N_G(Q,b_Q)/\C_G(Q)$ and $\Out_{\mathcal{F}}(Q)\cong\N_G(Q,b_Q)/Q\C_G(Q)$.

\begin{Lemma}\label{essential}
Let $Q_1:=\langle x^{2^{n-3}},y,z\rangle\cong D_8\ast C_{2^m}$ and $Q_2:=\langle x^{2^{n-3}},xy,z\rangle\cong D_8\ast C_{2^m}$. 
Then $Q_1$ and $Q_2$ are the only candidates for proper $\mathcal{F}$-centric, $\mathcal{F}$-radical subgroups up to conjugation. In particular the fusion of subpairs is controlled by $\N_G(Q_1,b_{Q_1})\cup\N_G(Q_2,b_{Q_2})\cup D$.
Moreover, one of the following cases occurs:
\begin{enumerate}
\item[(aa)] $n=e(B)=3$ or ($n\ge 4$ and $\Out_{\mathcal{F}}(Q_1)\cong\Out_{\mathcal{F}}(Q_2)\cong S_3$).
\item[(ab)] $n\ge 4$, $\N_G(Q_1,b_{Q_1})=\N_D(Q_1)\C_G(Q_1)$, and $\Out_{\mathcal{F}}(Q_2)\cong S_3$.
\item[(ba)] $n\ge 4$, $\Out_{\mathcal{F}}(Q_1)\cong S_3$, and $\N_G(Q_2,b_{Q_2})=\N_D(Q_2)\C_G(Q_2)$.
\item[(bb)] $\N_G(Q_1,b_{Q_1})=\N_D(Q_1)\C_G(Q_1)$ and $\N_G(Q_2,b_{Q_2})=\N_D(Q_2)\C_G(Q_2)$.
\end{enumerate}
In case (bb) the block $B$ is nilpotent.
\end{Lemma}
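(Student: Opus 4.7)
My plan is to classify the proper $\mathcal{F}$-centric, $\mathcal{F}$-radical subgroups of $D$ by reducing to the central quotient, then analyze $\Out_\mathcal{F}$ at each candidate, and finally derive nilpotency in case (bb). The reduction works because any $\mathcal{F}$-centric subgroup $Q\le D$ contains $\Z(D)=\langle z\rangle$, as $\langle z\rangle\le\C_D(Q)=\Z(Q)\le Q$. Hence such a $Q$ descends to a subgroup of $\bar D:=D/\langle z\rangle\cong D_{2^{n-1}}$, and its $\mathcal{F}$-centricity and radicality descend correspondingly. For $n\ge 4$ the classical fusion theory of dihedral $2$-groups yields exactly two classes of proper essential subgroups of $\bar D$, namely the Klein four groups $\langle\bar x^{2^{n-3}},\bar y\rangle$ and $\langle\bar x^{2^{n-3}},\bar x\bar y\rangle$, whose preimages in $D$ are precisely $Q_1$ and $Q_2$. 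For $n=3$ the quotient $\bar D$ is itself Klein four and has no proper essentials, so $D$ is the only $\mathcal{F}$-centric radical subgroup. In each case I would verify $\mathcal{F}$-centricity by the direct computation $\C_D(Q_i)=\langle z\rangle=\Z(Q_i)$, and then invoke Alperin's fusion theorem for saturated fusion systems to obtain control by $\N_G(Q_1,b_{Q_1})\cup\N_G(Q_2,b_{Q_2})\cup D$.

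To pin down $\Out_\mathcal{F}(Q_i)$, I would use that $Q_i\cong D_8\ast C_{2^m}$, so the argument of Lemma~\ref{aut} (with $n=3$) shows $\Aut(Q_i)$ is not a $2$-group and that its odd-order part is generated by a single element of order $3$, induced from the unique $C_3$ in $\Out(Q_8)\cong S_3$. Since $\mathcal{F}$-radicality gives $\pcore_2(\Out_\mathcal{F}(Q_i))=1$, the outer automorphism group in $\mathcal{F}$ is either trivial---equivalently $\N_G(Q_i,b_{Q_i})=\N_D(Q_i)\C_G(Q_i)$---or contains this $C_3$, in which case saturation of $\mathcal{F}$ combined with the $2$-part coming from $\N_D(Q_i)/Q_i\C_D(Q_i)$ forces $\Out_\mathcal{F}(Q_i)\cong S_3$. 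Independent choices at $Q_1$ and $Q_2$ yield the four cases (aa)--(bb). For $n=3$ the choices at $Q_1=Q_2=D$ coincide and reduce to $e(B)\in\{3,1\}$, which are absorbed into (aa) and (bb) respectively.

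Finally, nilpotency in case (bb) follows because $\Aut_\mathcal{F}(Q_i)=\Aut_D(Q_i)$ by hypothesis for $i=1,2$, and $\Aut_\mathcal{F}(D)=\Inn(D)$ because for $n\ge 4$ Lemma~\ref{aut} makes $\Out(D)$ a $2$-group while $e(B)=|\Out_\mathcal{F}(D)|$ is a $2'$-number (forcing $e(B)=1$), and for $n=3$ the (bb) hypothesis directly gives $e(B)=1$. Alperin's fusion theorem then yields $\mathcal{F}=\mathcal{F}_D(D)$, so $B$ is nilpotent by Brou\'e--Puig. The main obstacle I anticipate is the classification step: showing that no subgroup of $D$ outside the $D$-orbits of $Q_1,Q_2$ can be $\mathcal{F}$-centric radical. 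This relies on the well-understood essentials of the dihedral group together with careful transfer through the central extension $1\to\langle z\rangle\to D\to\bar D\to 1$, and on verifying that $Q_1$ and $Q_2$ are not $D$-conjugate (which follows by inspecting their intersections with the abelian maximal subgroup $\langle x,z\rangle$).
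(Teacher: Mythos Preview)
Your overall strategy is sound and ends in the same place as the paper, but two steps are imprecise.

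First, the sentence ``its $\mathcal{F}$-centricity and radicality descend correspondingly'' is not justified as written: you have not produced a fusion system on $\bar D$ for these properties to descend to (doing so would need $\langle z\rangle\unlhd\mathcal{F}$, which is part of what is under discussion). What is actually required is purely group-theoretic: for each proper $Q\ge\langle z\rangle$, decide whether $\Aut(Q)$ is a $2$-group; if it is, then $\N_D(Q)/Q$ is a nontrivial normal $2$-subgroup of $\Out_\mathcal{F}(Q)$ and $Q$ is not radical. Passing to $\bar D\cong D_{2^{n-1}}$ is still a convenient bookkeeping device, but the work is in the preimages: $\langle x^{2^j},z\rangle$ and $\langle x^iy,z\rangle$ are abelian with invariant factors of distinct exponents (using $m\ge 2$), so their automorphism groups are $2$-groups; and $\langle x^{2^j},x^iy,z\rangle\cong D_{2^{n-j}}\ast C_{2^m}$ has $2$-group automorphism group unless $n-j=3$, by Lemma~\ref{aut}. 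This is exactly what the paper does, except that it argues directly with $Q\cap\langle x,y\rangle$ instead of naming the quotient $\bar D$; the two organisations are equivalent, and yours is arguably tidier once the ``descent'' language is replaced by the honest $\Aut(Q)$ computation.

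Second, in your dichotomy you write ``the outer automorphism group in $\mathcal{F}$ is either trivial---equivalently $\N_G(Q_i,b_{Q_i})=\N_D(Q_i)\C_G(Q_i)$''. For $n\ge 4$ the right-hand side gives $\Out_\mathcal{F}(Q_i)\cong\N_D(Q_i)/Q_i\cong C_2$, not the trivial group, so the equivalence is false. Relatedly, you deduce $\pcore_2(\Out_\mathcal{F}(Q_i))=1$ from $\mathcal{F}$-radicality, but the case split (aa)--(bb) must hold regardless of whether $Q_i$ is radical. The paper handles this by first showing that every subgroup of $D$ isomorphic to $D_8\ast C_{2^m}$ already contains $z$ and hence is $D$-conjugate to $Q_1$ or $Q_2$; thus $Q_i$ is fully $\mathcal{F}$-normalized and $\N_D(Q_i)/Q_i\cong C_2$ is a Sylow $2$-subgroup of $\Out_\mathcal{F}(Q_i)$. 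Since $\lvert\Aut(Q_i)\rvert=2^k\cdot3$, this forces $\lvert\Out_\mathcal{F}(Q_i)\rvert\in\{2,6\}$, and the order-$6$ case is $S_3$ because both the $C_2$ from $\N_D(Q_i)$ and the $C_3$ act nontrivially on $Q_i/\Z(Q_i)\cong C_2^2$. Your treatment of nilpotency in case (bb) matches the paper.
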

\begin{proof}
Let $Q<D$ be $\mathcal{F}$-centric and $\mathcal{F}$-radical. Then $z\in\Z(D)\subseteq\C_D(Q)\subseteq Q$ and $Q=(Q\cap\langle x,y\rangle)\ast\langle z\rangle$. If $Q\cap\langle x,y\rangle$ is abelian, we have 
\begin{align*}
Q&=\langle x^iy,z\rangle\cong C_2\times C_{2^m}\hspace{5mm}\text{or}\\
Q&=\langle x,z\rangle\cong C_{2^n}\ast C_{2^m}\cong C_{2^{\max\{n,m\}}}\times C_{2^{\min\{n,m\}-1}}
\end{align*}
for some $i\in\mathbb{Z}$. In the first case $\Aut(Q)$ is a $2$-group, since $m\ge 2$. Then $\pcore_2(\Aut_{\mathcal{F}}(Q))\ne 1$. Thus, assume $Q=\langle x,z\rangle$.
The group $D\subseteq\N_G(Q,b_Q)$ acts trivially on $\Omega(Q)\subseteq\Z(D)$, while a nontrivial automorphism of $\Aut(Q)$ of odd order acts nontrivially on $\Omega(Q)$ (see Theorem~5.2.4 in \cite{Gorenstein}). This contradicts $\pcore_2(\Aut_{\mathcal{F}}(Q))=1$. (Moreover, by Lemma~5.4 in \cite{Linckelmann} we see that $\Aut_{\mathcal{F}}(Q)$ is a $2$-group.)

Hence by Lemma~\ref{aut}, $Q$ is isomorphic to $D_8\ast C_{2^m}$ and contains an element of the form $x^iy$. After conjugation with a suitable power of $x$ we may assume $Q\in\{Q_1,Q_2\}$. This shows the first claim.

The second claim follows from Alperin's fusion theorem. Here observe that in case $n=3$ we have $Q_1=Q_2=D$.

Let $S\le D$ be an arbitrary subgroup isomorphic to $D_8\ast C_{2^m}$. If $z\notin S$, then for $\langle S,z\rangle=(\langle S,z\rangle\cap\langle x,y\rangle)\langle z\rangle$ we have $\langle S,z\rangle'=S'\cong C_2$. However, this is impossible, since $\langle S,z\rangle\cap\langle x,y\rangle$ has at least order $16$.
This contradiction shows $z\in S$. Thus, $S$ is conjugate to $Q\in\{Q_1,Q_2\}$ under $D$. In particular $Q$ is fully $\mathcal{F}$-normalized (see Definition~2.2 in \cite{Linckelmann}). Hence, $\N_D(Q)\C_G(Q)/Q\C_G(Q)\cong\N_D(Q)/Q\cong C_2$ is a Sylow $2$-subgroup of $\Out_{\mathcal{F}}(Q)=\N_G(Q,b_Q)/Q\C_G(Q)$ by Proposition~2.5 in \cite{Linckelmann}. 
Assume $\N_D(Q)\C_G(Q)<\N_G(Q,b_Q)$. 
Since $\pcore_2(\Out_{\mathcal{F}}(Q))=1$ and $\lvert\Aut(Q)\rvert=2^k\cdot3$ for some $k\in\mathbb{N}$, we get $\Out_{\mathcal{F}}(Q)\cong S_3$.

The last claim follows from Alperin's fusion theorem and $e(B)=1$ (for $n\ge 4$).
\end{proof}

The naming of these cases is adopted from \cite{Brauer}. 
Since the cases (ab) and (ba) are symmetric, we ignore case (ba) for the rest of the paper. It is easy to see that $Q_1$ and $Q_2$ are not conjugate in $D$ if $n\ge 4$. Hence, by Alperin's fusion theorem the subpairs $(Q_1,b_{Q_1})$ and $(Q_2,b_{Q_2})$ are not conjugate in $G$. It is also easy to see that $Q_1$ and $Q_2$ are always $\mathcal{F}$-centric.

\begin{Lemma}\label{fixedpt}
Let $Q\in\{Q_1,Q_2\}$ such that $\N_G(Q,b_Q)/Q\C_G(Q)\cong S_3$. Then 
\[\C_Q(\N_G(Q,b_Q))=\Z(Q)=\langle x^{2^{n-2}},z\rangle.\]
\end{Lemma}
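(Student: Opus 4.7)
My plan is to prove the two inclusions separately. The inclusion $\C_Q(\N_G(Q,b_Q))\subseteq\Z(Q)$ is immediate: since $Q\subseteq\N_G(Q,b_Q)$, any $q\in Q$ centralized by $\N_G(Q,b_Q)$ is in particular centralized by $Q$ and hence lies in $\Z(Q)$. The equality $\Z(Q)=\langle x^{2^{n-2}},z\rangle$ is a direct computation in $Q\cong D_8\ast C_{2^m}$: in both cases $Q=Q_1$ and $Q=Q_2$ the $D_8$-factor has a generator of order $4$ whose square is $x^{2^{n-2}}$, and the central $C_{2^m}$ is $\langle z\rangle$. The observation that will drive the rest of the argument is that this center equals $\Z(D)$.

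The substance is the reverse inclusion $\Z(Q)\subseteq\C_Q(\N_G(Q,b_Q))$, i.e.\ that the conjugation action of $\N_G(Q,b_Q)$ fixes $\Z(Q)$ pointwise. The kernel of this action on $Q$ is $\C_G(Q)$, and since $\Z(Q)$ is characteristic in $Q$ and abelian, the action descends to a homomorphism $\phi\colon\Out_{\mathcal{F}}(Q)\cong S_3\to\Aut(\Z(Q))$, which we wish to show is trivial.

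I would handle $\phi$ by showing that its kernel contains both an element of order $3$ and an element of order $2$. For the order-$3$ part: $\Aut(\Z(Q))\cong\Aut(C_{2^m})$ is a $2$-group of order $2^{m-1}$, so the image of $\phi$ has $2$-power order, and the kernel therefore must contain the unique (normal) subgroup $A_3\le S_3$. For the order-$2$ part: Lemma~\ref{essential} (and its proof) shows that a Sylow $2$-subgroup of $\Out_{\mathcal{F}}(Q)$ is $\N_D(Q)\C_G(Q)/Q\C_G(Q)\cong\N_D(Q)/Q\cong C_2$ and is therefore realized by conjugation with an element of $D$; since $\Z(Q)=\Z(D)$, that element acts trivially on $\Z(Q)$. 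So $\ker\phi$ is a normal subgroup of $S_3$ strictly containing $A_3$, which forces $\ker\phi=S_3$.

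The only mild difficulty I anticipate is having to identify the Sylow $2$-subgroup of $\Out_{\mathcal{F}}(Q)$ concretely as $D$-conjugation in order to exhibit its trivial action on $\Z(Q)$; but Lemma~\ref{essential} has already done this identification. After that, the whole argument rests on the two structural facts $\Z(Q)=\Z(D)$ and that $\Aut(C_{2^m})$ is a $2$-group.
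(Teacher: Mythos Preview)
Your proposal is correct and follows essentially the same route as the paper: both arguments obtain the easy inclusion $\C_Q(\N_G(Q,b_Q))\subseteq\Z(Q)$ from $Q\subseteq\N_G(Q,b_Q)$, identify $\Z(Q)=\Z(D)=\langle z\rangle$ (noting that $x^{2^{n-2}}=z^{2^{m-1}}$), and then verify the reverse inclusion by observing that elements of $\N_D(Q)$ act trivially on $\Z(D)$ while odd-order automorphisms act trivially because $\Aut(C_{2^m})$ is a $2$-group. Your packaging via the homomorphism $\phi\colon\Out_{\mathcal{F}}(Q)\to\Aut(\Z(Q))$ is a tidy way to phrase it, but the underlying two-step check is identical to the paper's.
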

\begin{proof}
Since $Q\subseteq\N_D(Q,b_Q)$, we have $\C_Q(\N_G(Q,b_Q))\subseteq\C_Q(Q)=\Z(Q)$.
On the other hand $\N_D(Q)$ and every automorphism of $\Aut_{\mathcal{F}}(Q)$ of odd order act trivially on $\Z(Q)=\Z(D)=\langle z\rangle\cong C_{2^m}$. Hence, the claim follows.
\end{proof}

In order to determine a set of representatives for the conjugacy classes of $B$-subsections, we introduce a general result which does not depend on $B$, $D$, or the characteristic of $F$.

\begin{Lemma}\label{repgeneral}
Let $\mathcal{R}$ be a set of representatives for the $\mathcal{F}$-conjugacy classes of elements of $D$ such that $\langle \alpha\rangle$ is fully $\mathcal{F}$-normalized for $\alpha\in\mathcal{R}$ ($\mathcal{R}$ always exists). Then
\[\bigl\{(\alpha,b_\alpha):\alpha\in\mathcal{R}\bigr\}\]
is a set of representatives for the $G$-conjugacy classes of $B$-subsections, where $b_\alpha:=b_{\langle\alpha\rangle}$ has defect group $\C_D(\alpha)$.
\end{Lemma}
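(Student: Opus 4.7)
The plan is to invoke standard Brauer--Alperin--Broué subpair theory to reduce the problem to a statement about $\mathcal{F}$-conjugacy of elements of $D$, which is exactly what the hypothesis on $\mathcal{R}$ controls.

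First I would establish existence of $\mathcal{R}$. For every $\mathcal{F}$-conjugacy class of cyclic subgroups of $D$ there is a fully $\mathcal{F}$-normalized representative (this is the standard Sylow-type result for saturated fusion systems). If $\langle\alpha\rangle$ is fully $\mathcal{F}$-normalized and $\beta\in D$ satisfies $\langle\beta\rangle$ being $\mathcal{F}$-conjugate to $\langle\alpha\rangle$, then any $\mathcal{F}$-isomorphism $\langle\beta\rangle\to\langle\alpha\rangle$ transports $\beta$ into $\langle\alpha\rangle$. So picking representatives of the $\mathcal{F}$-classes of generators of each such $\langle\alpha\rangle$ yields $\mathcal{R}$.

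Next I would prove that every $B$-subsection $(u,b_u)$ is $G$-conjugate to some $(\alpha,b_\alpha)$ with $\alpha\in\mathcal{R}$. By the standard theory of $B$-subpairs we may assume $u\in D$ and $(\langle u\rangle,b_u)\le(D,b_D)$, with $b_u=b_{\langle u\rangle}$. By the choice of $\mathcal{R}$, there is an $\mathcal{F}$-morphism $\phi\colon\langle u\rangle\to\langle\alpha\rangle$ with $\phi(u)=\alpha$ for some $\alpha\in\mathcal{R}$. By definition of $\mathcal{F}$, $\phi$ is induced by conjugation with some $g\in G$ satisfying $(\langle u\rangle,b_u)^g\le(D,b_D)$. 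By uniqueness of subpairs contained in $(D,b_D)$ above $\langle\alpha\rangle$, necessarily $(\langle u\rangle,b_u)^g=(\langle\alpha\rangle,b_\alpha)$, and hence $(u,b_u)^g=(\alpha,b_\alpha)$.

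For non-$G$-conjugacy of distinct representatives, suppose $(\alpha,b_\alpha)^g=(\beta,b_\beta)$ for some $g\in G$ and $\alpha,\beta\in\mathcal{R}$. Then $(\langle\alpha\rangle,b_\alpha)^g=(\langle\beta\rangle,b_\beta)$ is a subpair contained in $(D^g,b_D^g)$. Choose $h\in G$ with $(D^g,b_D^g)^h\le(D,b_D)$ and $(\langle\beta\rangle,b_\beta)^h=(\langle\beta\rangle,b_\beta)$ (possible by Sylow's theorem for subpairs applied to $\N_G(\langle\beta\rangle,b_\beta)$). Then conjugation by $gh$ defines an $\mathcal{F}$-morphism $\langle\alpha\rangle\to\langle\beta\rangle$ sending $\alpha\mapsto\beta$, so $\alpha$ and $\beta$ are $\mathcal{F}$-conjugate, forcing $\alpha=\beta$. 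Finally, since $\langle\alpha\rangle$ is fully $\mathcal{F}$-normalized, it is also fully $\mathcal{F}$-centralized, and a standard result asserts that $\C_D(\alpha)=\C_D(\langle\alpha\rangle)$ is then a defect group of $b_\alpha$.

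The main obstacle I expect is the Sylow adjustment in the non-conjugacy step: one has to make sure that the $G$-element witnessing conjugacy of the subsections can actually be replaced by one realizing an $\mathcal{F}$-morphism inside the reference subpair $(D,b_D)$. Everything else is bookkeeping around the basic equivalence ``$G$-conjugacy of subpairs $\le(D,b_D)$ $\Leftrightarrow$ $\mathcal{F}$-isomorphism'' for cyclic subgroups.
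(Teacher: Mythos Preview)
Your proposal is correct and follows essentially the same route as the paper: reduce to subpairs contained in $(D,b_D)$, use an $\mathcal{F}$-morphism to reach a representative in $\mathcal{R}$, and deduce the defect group from fully $\mathcal{F}$-normalized $\Rightarrow$ fully $\mathcal{F}$-centralized. The only difference is that your Sylow adjustment in the non-conjugacy step is unnecessary: if $(\langle\alpha\rangle,b_\alpha)^g=(\langle\beta\rangle,b_\beta)$ with both subpairs already contained in $(D,b_D)$, then by the very definition of $\mathcal{F}$ conjugation by $g$ is already an $\mathcal{F}$-morphism $\langle\alpha\rangle\to\langle\beta\rangle$, which is exactly the paper's ``slight abuse of notation''.
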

\begin{proof}
Let $(\alpha,b)$ be an arbitrary $B$-subsection. Then $(\langle\alpha\rangle,b)$ is a $B$-subpair which lies in some Sylow $B$-subpair. Since all Sylow $B$-subpairs are conjugate in $\mathcal{F}$, we may assume $(\langle\alpha\rangle,b)\le(D,b_D)$. This shows $b=b_\alpha$. By the definition of $\mathcal{R}$ there exists a morphism $f$ in $\mathcal{F}$ such that $\beta:=f(\alpha)\in\mathcal{R}$. If we compose $f$ with inclusion maps from the left and the right, we get $f:\langle\alpha\rangle\to D$. Then the definition of $\mathcal{F}$ implies $f(\alpha,b_\alpha)=(\beta,b_\beta)$.

It is also easy to see that we can always choose a representative $\alpha$ such that $\langle\alpha\rangle$ is fully $\mathcal{F}$-normalized.

Now suppose that $(\alpha,b_\alpha)$ and $(\beta,b_\beta)$ with $\alpha,\beta\in\mathcal{R}$ are conjugate by $g\in G$. Then (with a slight abuse of notation) we have $g\in\Hom_{\mathcal{F}}(\langle\alpha\rangle,\langle\beta\rangle)$. Hence, $\alpha=\beta$.

It remains to prove that $b_\alpha$ has defect group $\C_D(\alpha)$ for $\alpha\in\mathcal{R}$. By Proposition~2.5 in \cite{Linckelmann} $\langle\alpha\rangle$ is also fully $\mathcal{F}$-centralized. Hence, Theorem~2.4(ii) in \cite{Linckelmann2} implies the claim.
\end{proof}

\begin{Lemma}\label{subrep}
The set $\mathcal{R}$ in the previous lemma is given as follows:
\begin{enumerate}[(i)]
\item $x^iz^j$ ($i=0,1,\ldots,2^{n-2}$, $j=0,1,\ldots,2^{m-1}-1$) in case (aa).\label{aarep}
\item $x^iz^j$ and $yz^j$ ($i=0,1,\ldots,2^{n-2}$, $j=0,1,\ldots,2^{m-1}-1$) in case (ab).\label{abrep}
\end{enumerate}
\end{Lemma}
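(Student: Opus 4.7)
The plan is to enumerate the $D$-conjugacy classes of elements of $D$ first, then use Lemma~\ref{essential} to see how $\mathcal{F}$-conjugacy refines $D$-conjugacy via the order-$3$ automizers of $Q_1$ and $Q_2$, and finally to verify that $\langle\alpha\rangle$ is fully $\mathcal{F}$-normalized for each chosen representative as required by Lemma~\ref{repgeneral}.

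The $D$-conjugacy step is a direct computation: from $yxy^{-1}=x^{-1}$ (equivalently $xy=yx^{-1}$) one finds that conjugation by $y$ sends $x^iz^j\mapsto x^{-i}z^j$ and conjugation by $x$ sends $x^iyz^j\mapsto x^{i+2}yz^j$. Combined with the central identification $x^{2^{n-2}}=z^{2^{m-1}}$, this shows that $\langle x,z\rangle$ contributes the representatives $x^iz^j$ with $i\in\{0,\ldots,2^{n-2}\}$ and $j\in\{0,\ldots,2^{m-1}-1\}$, while the coset $y\langle x,z\rangle$ splits into two parity families represented by $yz^j$ and $xyz^j$ for $j\in\{0,\ldots,2^{m-1}-1\}$.

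For the $\mathcal{F}$-fusion step, Lemma~\ref{essential} reduces matters to $\Aut_{\mathcal{F}}(Q_1)$, $\Aut_{\mathcal{F}}(Q_2)$ and $\Aut_{\mathcal{F}}(D)=\Inn(D)$ (for $n\ge 4$; if $n=3$ then $Q_1=Q_2=D$ absorbs this into the $S_3$-action on $D$). Whenever $\Out_{\mathcal{F}}(Q_i)\cong S_3$, a subgroup of order $3$ in $\Aut_{\mathcal{F}}(Q_i)$ fixes $\Z(Q_i)=\langle z\rangle$ pointwise by Lemma~\ref{fixedpt} and acts transitively on the three non-central cosets of $\langle z\rangle$ in $Q_i/\Z(Q_i)\cong C_2\times C_2$. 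In $Q_1=\langle x^{2^{n-3}},y,z\rangle$ these cosets are $x^{2^{n-3}}\langle z\rangle$, $y\langle z\rangle$, $x^{2^{n-3}}y\langle z\rangle$, and in $Q_2=\langle x^{2^{n-3}},xy,z\rangle$ they are $x^{2^{n-3}}\langle z\rangle$, $xy\langle z\rangle$, $x^{2^{n-3}+1}y\langle z\rangle$; hence the $\mathcal{F}$-orbit of $yz^j$ meets $x^{2^{n-3}}\langle z\rangle\subseteq\langle x,z\rangle$ via $Q_1$, and the orbit of $xyz^j$ meets $x^{2^{n-3}}\langle z\rangle$ via $Q_2$, so each such orbit contains an element of the form $x^{2^{n-3}}z^{j'}$ already on the list.

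Combining these pieces: in case~(aa) both $Q_1$ and $Q_2$ contribute, so every $yz^j$ and every $xyz^j$ is $\mathcal{F}$-conjugate to some $x^iz^{j'}$, and no two $x^iz^j$ are merged (since each non-trivial $S_3$-image of $x^{2^{n-3}}\langle z\rangle$ lies in a coset containing $y$). In case~(ab) only $Q_2$ contributes, and $y=x^0y\notin Q_2$ because $0\not\equiv 1\pmod{2^{n-3}}$ for $n\ge 4$; the $xyz^j$ are therefore absorbed into the $x^iz^{?}$ family while the $yz^j$ remain as additional representatives. Full $\mathcal{F}$-normalization is then cheap: for $\alpha\in\langle x,z\rangle$ we have $\langle x,z\rangle\subseteq\N_D(\langle\alpha\rangle)$ of index at most $2$ in $D$, which is clearly maximal in the $\mathcal{F}$-orbit; for $\alpha=yz^j$ in case~(ab), the $\mathcal{F}$-class equals the $D$-class, so the condition is automatic. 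The main obstacle is the bookkeeping in the fusion step---tracking the $S_3$-action on the three non-central cosets precisely enough to confirm that the $yz^j$ and $xyz^j$ families are absorbed into the $x^iz^{j'}$ family without causing any accidental identifications among the listed representatives.
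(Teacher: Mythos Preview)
Your proposal is correct and follows essentially the same route as the paper: first identify the $D$-conjugacy classes, then use Alperin's fusion theorem (Lemma~\ref{essential}) to reduce the extra $\mathcal{F}$-fusion to the order-$3$ automorphisms of $Q_1$ and $Q_2$, which by Lemma~\ref{fixedpt} fix $\Z(Q_i)=\langle z\rangle$ pointwise and hence cycle the three non-central cosets of $\langle z\rangle$ in $Q_i$. The paper phrases the non-merging of the $x^iz^j$ more tersely (a one-line appeal to Lemma~\ref{fixedpt}), whereas you make the coset picture explicit; your parity observation that $x^{2k}yz^j$ never lies in $Q_2$ and $x^{2k+1}yz^j$ never lies in $Q_1$ (for $n\ge4$) is exactly what makes the chain argument close, and matches the paper's separation into the two families $x^{2i}yz^j$ and $x^{2i+1}yz^j$. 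Your full-normalization checks (index at most $2$ for $\langle x^iz^j\rangle$, and $\mathcal{F}$-class $=D$-class for $\langle yz^j\rangle$ in case~(ab)) are the same as the paper's.
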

\begin{proof}
By Lemma~\ref{fixedpt} in any case the elements $x^iz^j$ ($i=0,1,\ldots,2^{n-2}$, $j=0,1,\ldots,2^{m-1}-1$) are pairwise non-conjugate in $\mathcal{F}$. Moreover, $\langle x,z\rangle\subseteq\C_G(x^iz^j)$ and $|D:\N_D(\langle x^iz^j\rangle)|\le 2$.
Suppose that $\langle x^iyz^j\rangle\unlhd D$ for some $i,j\in\mathbb{Z}$. Then we have $x^{i+2}yz^j=x(x^iyz^j)x^{-1}\in\langle x^iyz^j\rangle$ and the contradiction $x^2\in\langle x^iyz^j\rangle$. This shows that the subgroups $\langle x^iz^j\rangle$ are always fully $\mathcal{F}$-normalized.

Assume that case (aa) occurs. Then the elements of the form $x^{2i}yz^j$ ($i,j\in\mathbb{Z}$) are conjugate to elements of the form $x^{2i}z^j$ under $D\cup\N_G(Q_1,b_{Q_1})$. Similarly, the elements of the form $x^{2i+1}yz^j$ ($i,j\in\mathbb{Z}$) are conjugate to elements of the form $x^{2i}z^j$ under $D\cup\N_G(Q_2,b_{Q_2})$. The claim follows in this case.

In case (ab) the given elements are pairwise non-conjugate, since no conjugate of $yz^j$ lies in $Q_2$. As in case (aa) the elements of the form $x^{2i}yz^j$ ($i,j\in\mathbb{Z}$) are conjugate to elements of the form $yz^j$ under $D$ and the elements of the form $x^{2i+1}yz^j$ ($i,j\in\mathbb{Z}$) are conjugate to elements of the form $x^{2i}z^j$ under $D\cup\N_G(Q_2,b_{Q_2})$. Finally, the subgroups $\langle yz^j\rangle$ are fully $\mathcal{F}$-normalized, since $yz^j$ is not conjugate to an element in $Q_2$.
\end{proof}

\section{The numbers \texorpdfstring{$k(B)$}{k(B)}, \texorpdfstring{$k_i(B)$}{ki(B)} and \texorpdfstring{$l(B)$}{l(B)}}

Now we study the generalized decomposition numbers of $B$. If $l(b_u)=1$, then we denote the unique irreducible modular character of $b_u$ by $\phi_u$. In this case the generalized decomposition numbers $d^u_{\chi\phi_u}$ for $\chi\in\Irr(B)$ form a column $d(u)$. Let $2^k$ be the order of $u$, and let $\zeta:=\zeta_{2^k}$ be a primitive $2^k$-th root of unity. Then the entries of $d(u)$ lie in the ring of integers $\mathbb{Z}[\zeta]$. Hence, there exist integers $a_i^u:=(a_i^{u}(\chi))_{\chi\in\Irr(B)}\in\mathbb{Z}^{k(B)}$ such that
\[d_{\chi\phi_u}^u=\sum_{i=0}^{2^{k-1}-1}{a_i^{u}(\chi)\zeta^i}.\]
We extend this by $a_{i+2^{k-1}}^u:=-a_i^u$ for all $i\in\mathbb{Z}$.

Let $|G|=2^ar$ where $2\nmid r$. We may assume $\mathbb{Q}(\zeta_{|G|})\subseteq K$. Then $\mathbb{Q}(\zeta_{|G|})\mid\mathbb{Q}(\zeta_r)$ is a Galois extension, and we denote the corresponding Galois group by
$\mathcal{G}:=\Gal\bigl(\mathbb{Q}(\zeta_{|G|})\mid\mathbb{Q}(\zeta_r)\bigr)$. Restriction gives an isomorphism
$\mathcal{G}\cong\Gal\bigl(\mathbb{Q}(\zeta_{2^a})\mid\mathbb{Q}\bigr)$.
In particular $|\mathcal{G}|=2^{a-1}$.
For every $\gamma\in\mathcal{G}$ there is a number $\widetilde{\gamma}\in\mathbb{N}$ such that $\gcd(\widetilde{\gamma},|G|)=1$, $\widetilde{\gamma}\equiv 1\pmod{r}$, and $\gamma(\zeta_{|G|})=\zeta_{|G|}^{\widetilde{\gamma}}$ hold. Then $\mathcal{G}$ acts on the set of subsections by
$^\gamma(u,b):=(u^{\widetilde{\gamma}},b)$.
For every $\gamma\in\mathcal{G}$ we get
\begin{equation}\label{dgamma}
d(u^{\widetilde{\gamma}})=\sum_{s\in\mathcal{S}}{a_s^{u}\zeta_{2^k}^{s\widetilde{\gamma}}}
\end{equation}
for every system $\mathcal{S}$ of representatives of the cosets of $2^{k-1}\mathbb{Z}$ in $\mathbb{Z}$.
It follows that
\begin{equation}\label{aiuspur}
a_s^u=2^{1-a}\sum_{\gamma\in\mathcal{G}}{d\bigl(u^{\widetilde{\gamma}}\bigr)\zeta_{2^k}^{-\widetilde{\gamma}s}}
\end{equation}
for $s\in\mathcal{S}$. 

For sake of completeness, we state the following general lemma which does not depend on $D$.

\begin{Lemma}\label{heightzeroodd}
Let $(u,b_u)$ be a $B$-subsection with $|\langle u\rangle|=2^k$ and $l(b_u)=1$.
If $\chi\in\Irr(B)$ has height $0$, then the sum 
\begin{equation}\label{sum}
\sum_{i=0}^{2^{k-1}-1}{a_i^u(\chi)}
\end{equation}
is odd.
\end{Lemma}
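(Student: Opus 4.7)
The plan is a two-step argument that translates the integer parity statement into a $\pi$-adic question about the generalized decomposition number, then pins down that $\pi$-adic information from the height zero hypothesis.

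For the first step, set $\pi := 1-\zeta\in\mathbb{Z}[\zeta]$; this is a prime above $2$ with residue field $\mathbb{Z}[\zeta]/(\pi)\cong\mathbb{F}_2$, and $\zeta\equiv 1\pmod{\pi}$. Reducing the defining expansion
\[
d^u_{\chi\phi_u} = \sum_{i=0}^{2^{k-1}-1} a_i^u(\chi)\zeta^i
\]
modulo $\pi$ therefore gives $d^u_{\chi\phi_u}\equiv\sum_i a_i^u(\chi)\pmod\pi$. Because the right-hand side of this congruence is a rational integer, the sum $(\ref{sum})$ is odd if and only if $\pi\nmid d^u_{\chi\phi_u}$, i.e.\ $d^u_{\chi\phi_u}$ is a $\pi$-adic unit.

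For the second step, I would show the $\pi$-adic non-vanishing of $d^u_{\chi\phi_u}$ from the height zero assumption. Brauer's second main theorem yields
\[
\chi(u) = \sum_{b:b^G=B}\ \sum_{\varphi\in\IBr(b)} d^u_{\chi\varphi}\,\varphi(1),
\]
where each block $b$ of $\C_G(u)$ on the right has defect group (a conjugate of) $\C_D(u)$, so every $\varphi(1)$ appearing has the same $2$-part $|\C_G(u):\C_D(u)|_2$. Combining this with the classical congruence $\chi(u)\equiv\chi(1)\pmod\pi$ for the $2$-element $u$, and the height zero relation $\nu_2(\chi(1))=\nu_2(|G|)-\nu_2(|D|)$, a bookkeeping of $2$-parts through the identity $\nu_2(\phi_u(1))=\nu_2(|\C_G(u)|)-\nu_2(|\C_D(u)|)$ (coming from $l(b_u)=1$) yields $\nu_\pi(d^u_{\chi\phi_u})=0$, as needed.

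The main technical obstacle is that Brauer's theorem may involve several blocks $b$ of $\C_G(u)$ mapping to $B$, all with contributions of the same $\pi$-adic magnitude; a priori these could combine to push the valuation on the right higher than that of $\chi(1)$, breaking the argument. To resolve this, I would invoke the orthogonality relation $\sum_{\chi'\in\Irr(B)}\lvert d^u_{\chi'\phi_u}\rvert^2=|\C_D(u)|$ (valid since $l(b_u)=1$), together with the inequality $\nu_\pi(d^u_{\chi'\phi_u})\ge 0$, to force the minimum valuation among the height zero characters to be attained, and then a counting argument compatible with $k_0(B)$ to ensure that $\chi$ itself sits at valuation zero.
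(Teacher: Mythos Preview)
Your Step~1 is correct and is the standard reduction: since $\zeta\equiv 1\pmod{(1-\zeta)}$ and the residue field is $\mathbb{F}_2$, the parity of the sum \eqref{sum} is equivalent to $d^u_{\chi\phi_u}$ being a unit in $R$.

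Step~2, however, has a genuine gap. The congruence $\chi(u)\equiv\chi(1)\pmod{(1-\zeta)}$ is far too weak once $D$ is not a Sylow $2$-subgroup of $G$: then $\nu(\chi(1))>0$ already, and the congruence gives no information about $\nu(\chi(u))$. Even granting the single-block case, your bookkeeping tacitly needs $\nu_2(\chi(1))=\nu_2(\phi_u(1))$, i.e.\ $|G:D|_2=|\C_G(u):\C_D(u)|_2$, which need not hold. Finally, the multiple-block obstacle you flag is real, and the proposed fix is circular: a ``counting argument compatible with $k_0(B)$'' presupposes knowledge of $k_0(B)$, but the present lemma is precisely the input to Lemma~\ref{olsson}, which \emph{establishes} the bound $k_0(B)\le 2^{m+1}$. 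The orthogonality relation $\sum_{\chi'}|d^u_{\chi'\phi_u}|^2=|\C_D(u)|$ constrains the decomposition numbers only in aggregate and cannot single out a given height-zero $\chi$.

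The intended argument---to which the paper simply refers via \cite{SambaleD2nxC2m}---bypasses all of this with Brauer's contributions. Since $l(b_u)=1$ the Cartan matrix of $b_u$ is the $1\times 1$ matrix $(|\C_D(u)|)$, and the contribution collapses to $m^u_{\chi\chi}=|d^u_{\chi\phi_u}|^2$. Brauer's results (5G) and (5H) in \cite{BrauerBlSec2} (invoked explicitly later in the proof of Theorem~\ref{main}) give, for \emph{any} $B$-subsection, that $h(\chi)=0$ if and only if $m^u_{\chi\chi}\in R^\times$. Hence $d^u_{\chi\phi_u}\in R^\times$, and your Step~1 then finishes the proof.
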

\begin{proof}
See \cite{SambaleD2nxC2m}.
\end{proof}

As in \cite{SambaleD2nxC2m} we prove Olsson's conjecture first. 

\begin{Lemma}\label{olsson}
Olsson's conjecture $k_0(B)\le 2^{m+1}=|D:D'|$ is satisfied in all cases.
\end{Lemma}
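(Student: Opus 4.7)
Plan: My plan is to follow the template of \cite{SambaleD2nxC2m}: pick a subsection $(u,b_u)$ with $l(b_u)=1$ whose Cartan invariant is computable, and then refine the orthogonality relation for the column $d(u)$ via the Galois action of $\mathcal{G}$ on the integer coefficients $a_i^u$. Case (bb) is immediate since $B$ is nilpotent and one has $k_0(B)=|D:D'|=2^{m+1}$. In cases (aa) and (ab) with $n\ge 4$, the natural candidate is the non-major subsection $(u,b_u)=(x,b_x)$: because $x\notin Q_1\cup Q_2$ (an easy check once $n\ge 4$) and all $\mathcal{F}$-morphisms on $\langle x\rangle$ factor through $D$-conjugation by Alperin's fusion theorem combined with Lemma \ref{essential}, the cyclic group $\langle x\rangle$ is not $\mathcal{F}$-conjugate to a subgroup of either essential. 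Consequently $\C_{\mathcal{F}}(\langle x\rangle)$ has no essential subgroups, so $b_x$ is nilpotent with $l(b_x)=1$ and unique Cartan invariant $|\C_D(x)|=|\langle x,z\rangle|=2^{n+m-2}$.

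Having fixed $u=x$, orthogonality gives $\sum_{\chi\in\Irr(B)}|d^x_{\chi\phi_x}|^2=2^{n+m-2}$. To sharpen this into the desired bound $k_0(B)\le 2^{m+1}$, I would expand $d^x_{\chi\phi_x}=\sum_{i}a_i^x(\chi)\zeta^i$ with $\zeta=\zeta_{2^{n-1}}$ as in the setup preceding Lemma \ref{heightzeroodd}, average $|d(x^{\widetilde\gamma})_\chi|^2$ over $\gamma\in\mathcal{G}$ using (\ref{dgamma}) and (\ref{aiuspur}), and apply a Parseval identity for $\mathcal{G}$ to turn the Cartan equality into one for the integer matrix $(a_i^x(\chi))$ weighted by $|\mathcal{G}|=2^{a-1}$. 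By Lemma \ref{heightzeroodd} the coefficient-sum $\sum_i a_i^x(\chi)$ is odd for every height-zero $\chi$, so each such row contributes at least $1$ to this identity; comparing the two sides and matching the powers of $2$ between $|\C_D(x)|$, $|\langle x\rangle|=2^{n-1}$, and $|D:D'|=2^{m+1}$ should produce $k_0(B)\le 2^{m+1}$.

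For the residual case (aa) with $n=3$ (where $D=Q_1=Q_2$ forces $x\in Q_1$ and the nilpotent-subsection trick breaks), I would fall back on the explicit non-major subsections provided by Lemma \ref{subrep}, namely $(xz^j,b_{xz^j})$ for $j=0,\ldots,2^{m-1}-1$, and reduce the required estimate to Olsson's analysis of $Q_8\ast C_{2^m}$ in \cite{Olsson}. The main obstacle is the $2$-adic bookkeeping between the Cartan value $2^{n+m-2}$, the Galois factor $2^{a-1}$, and the parity input from Lemma \ref{heightzeroodd}: as the introduction warns, a direct major-subsection Cartan bound is not sharp, and it is precisely the combined use of a non-major nilpotent subsection together with the Galois-twisted integer decomposition $(a_i^u(\chi))$ that should yield the precise constant $2^{m+1}$.
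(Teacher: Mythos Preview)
Your choice of the subsection $(x,b_x)$ is exactly right, and the paper uses it too. However, your outline misses the one step that actually makes the bound come out as $2^{m+1}$ rather than something weaker. Averaging $|d(x^{\widetilde\gamma})|^2$ over $\mathcal{G}$ and invoking Lemma~\ref{heightzeroodd} only tells you that for a height-zero $\chi$ \emph{some} $a_i^x(\chi)$ is nonzero; plugging that into the Parseval identity $\sum_{s}(a_s^x,a_s^x)=|\langle x,z\rangle|=2^{n+m-2}$ merely yields $k_0(B)\le 2^{n+m-2}$, which is Brauer's bound, not Olsson's. The sharpening in the paper comes from the observation you do not mention: since $yxy^{-1}=x^{-1}$, the subsections $(x,b_x)$ and $(x^{-1},b_x)={}^{\gamma}(x,b_x)$ (for $\gamma$ restricting to complex conjugation) are $G$-conjugate, whence $d(x)=d(x^{\widetilde\gamma})$ and, via Eq.~\eqref{dgamma},
\[a_j^x(\chi)=a_{-j}^x(\chi)=-a_{2^{n-2}-j}^x(\chi)\qquad\text{for all }j.\]
This symmetry kills $a_{2^{n-3}}^x$ and forces $\sum_{i=0}^{2^{n-2}-1}a_i^x(\chi)=a_0^x(\chi)$, so Lemma~\ref{heightzeroodd} now says precisely that $a_0^x(\chi)$ is odd for height-zero $\chi$. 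The same conjugacy doubles the count in the double sum $\sum_{\gamma,\delta}(d(x^{\widetilde\gamma}),d(x^{\widetilde\delta}))$, giving $(a_0^x,a_0^x)=2^{m+1}$, and the bound $k_0(B)\le 2^{m+1}$ follows immediately.

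Two smaller points. First, your reason for $l(b_x)=1$ (that $x\notin Q_1\cup Q_2$) only works for $n\ge4$; the paper instead uses that $\Aut_{\mathcal{F}}(\langle x,z\rangle)$ is a $2$-group (shown in the proof of Lemma~\ref{essential}), which is valid for all $n\ge3$. Second, there is therefore no need to treat $n=3$ separately: the argument with $(x,b_x)$ and the symmetry above works uniformly, so your proposed fallback to \cite{Olsson} is unnecessary.
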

\begin{proof}
Let $\gamma\in\mathcal{G}$ such that the restriction of $\gamma$ to $\mathbb{Q}(\zeta_{2^a})$ is the complex conjugation. Then $x^{\widetilde{\gamma}}=x^{-1}$. The block $b_x$ has defect group $\C_D(x)=\langle x,z\rangle$ by Lemma~\ref{repgeneral}. Since we have shown that $\Aut_{\mathcal{F}}(\langle x,z\rangle)$ is a $2$-group, $b_x$ is nilpotent. In particular $l(b_x)=1$.
Since the subsections $(x,b_x)$ and $(x^{-1},b_{x^{-1}})=(x^{-1},b_x)={^{\gamma}(x,b_x)}$ are conjugate by $y$, we have $d(x)=d(x^{\widetilde{\gamma}})$ and 
\begin{equation}\label{aiequal}
a_j^x(\chi)=a_{-j}^x(\chi)=-a_{2^{n-2}-j}^x(\chi)                                                                    \end{equation} 
for all $\chi\in\Irr(B)$ by Eq.~\eqref{dgamma}. In particular $a_{2^{n-3}}^x(\chi)=0$. By the orthogonality relations we have $(d(x),d(x))=|\langle x,z\rangle|=2^{n-2+m}$. On the other hand the subsections $(x,b_x)$ and $(x^i,b_{x^i})=(x^i,b_x)$ are not conjugate for odd $i\in\{3,5,\ldots,2^{n-2}-1\}$. Eq.~\eqref{aiuspur} implies 
\[(a_0^x,a_0^x)=2^{2(1-a)}\sum_{\gamma,\delta\in\mathcal{G}}{\bigl(d(x^{\widetilde{\gamma}}),d(x^{\widetilde{\delta}})\bigr)}=2^{2(1-a)}2^{2a-n+1}(d(x),d(x))=2^{m+1}.\]
Combining Eq.~\eqref{aiequal} with Lemma~\ref{heightzeroodd} we see that $a_0^x(\chi)\ne 0$ is odd for characters $\chi\in\Irr(B)$ of height $0$. This proves the lemma.
\end{proof}

We remark that Olsson's conjecture in case (bb) also follows from Lemma~\ref{essential}. Moreover, in case (ab) Olsson's conjecture follows easily from Theorem~3.1 in \cite{Robinson}.

\begin{Lemma}\label{valuation}
Let $\nu$ be the (exponential) valuation of $R$ and let $\zeta$ be a primitive $2^k$-th root of unity for $k\ge 2$. Then $0<\nu(1+\zeta)<1$.
\end{Lemma}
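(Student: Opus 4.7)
The plan is to reduce this to a classical valuation computation in the $2^k$-th cyclotomic extension: identify $1+\zeta$ as an associate of $1-\zeta$ in $\mathbb{Z}[\zeta]$, and then pin down $\nu(1-\zeta)$ from the factorization of $2$ provided by the cyclotomic polynomial $\Phi_{2^k}$.

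First I would observe that since $k\ge 2$, the integer $2^{k-1}+1$ is odd and hence coprime to $2^k$, so $-\zeta=\zeta^{2^{k-1}+1}$ is again a primitive $2^k$-th root of unity; in particular $1+\zeta=1-(-\zeta)$ has the form $1-\zeta^j$ with $\gcd(j,2^k)=1$. For any such $j$ the quotient $(1-\zeta^j)/(1-\zeta)=1+\zeta+\dots+\zeta^{j-1}$ is a unit in $\mathbb{Z}[\zeta]$, its inverse being the analogous polynomial in $\zeta^{j'}$ with $jj'\equiv 1\pmod{2^k}$. Hence $\nu(1+\zeta)=\nu(1-\zeta)$.

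Next, from $\Phi_{2^k}(X)=X^{2^{k-1}}+1=\prod_{j\in(\mathbb{Z}/2^k)^\ast}(X-\zeta^j)$, evaluation at $X=1$ yields $2=\prod_j(1-\zeta^j)$, a product of $\varphi(2^k)=2^{k-1}$ factors, each an associate of $1-\zeta$ by the previous step. Applying $\nu$ one obtains $\nu(2)=2^{k-1}\nu(1-\zeta)$, so
\[\nu(1+\zeta)=\nu(1-\zeta)=\nu(2)/2^{k-1}.\]

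Finally, the lower bound $\nu(1+\zeta)>0$ is immediate from $\nu(2)>0$, which holds because $2\in(\pi)$ (as $F$ has characteristic $2$). The upper bound $\nu(1+\zeta)<1$ follows, under the normalization $\nu(2)=1$ used in the paper, from $1/2^{k-1}\le 1/2<1$ for $k\ge 2$. The whole argument is standard local-cyclotomic theory and there is no real obstacle; the only point requiring care is the normalization of $\nu$ (extended uniquely from $R$ to $K(\zeta)$, with $\nu(2)=1$) so that the strict inequality $\nu(1+\zeta)<1$ is meaningful — fractional values are permitted by the extension.
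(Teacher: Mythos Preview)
Your argument is correct and yields the same explicit value $\nu(1+\zeta)=\nu(2)/2^{k-1}$ as the paper, but by a different route. The paper proceeds by induction on $k$: from $(1+\zeta)^2=1+\zeta^2+2\zeta$ and the inductive hypothesis $\nu(1+\zeta^2)<1=\nu(2\zeta)$ one gets $2\nu(1+\zeta)=\nu(1+\zeta^2)$, halving the value at each step from the base case $\nu(1+i)=\tfrac12$. Your approach is the standard cyclotomic one: recognize $1+\zeta$ as an associate of $1-\zeta$ in $\mathbb{Z}[\zeta]$ and read off its valuation from $\Phi_{2^k}(1)=2$. Your method is quicker and gives the exact value in one stroke; the paper's induction is more self-contained in that it avoids invoking the unit property of $(1-\zeta^j)/(1-\zeta)$ and the cyclotomic factorization. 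One small remark: you speak of extending $\nu$ to $K(\zeta)$, but in the paper's setup $K$ already contains all $|G|$-th roots of unity, so $\zeta\in R$ and no extension is needed; the fractional values simply reflect the normalization $\nu(2)=1$ rather than $\nu(\pi)=1$.
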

\begin{proof}
We prove this by induction on $k$. For $k=2$ we have $\zeta\in\{\pm i\}$, where $i=\sqrt{-1}$. Then $2\nu(1+i)=\nu((1+i)^2)=\nu(2i)=1$ and the claim follows. Now let $k\ge 3$. Then $2\nu(1+\zeta)=\nu((1+\zeta)^2)=\nu(1+\zeta^2+2\zeta)=\nu(1+\zeta^2)$, since $\nu(1+\zeta^2)<1=\nu(2\zeta)$ by induction.
\end{proof}

\begin{Theorem}\label{main}\hfill
\begin{enumerate}[(i)]
\item In case (aa) and $n=3$ we have $k(B)=2^{m-1}\cdot 7$, $k_0(B)=2^{m+1}$, $k_1(B)=2^{m-1}\cdot 3$, and $l(B)=3$.
\item In case (aa) and $n\ge 4$ we have $k(B)=2^{m-1}(2^{n-2}+5)$, $k_0(B)=2^{m+1}$, $k_1(B)=2^{m-1}(2^{n-2}-1)$, $k_{n-2}(B)=2^m$, and $l(B)=3$.
\item In case (ab) we have $k(B)=2^{m-1}(2^{n-2}+4)$, $k_0(B)=2^{m+1}$, $k_1(B)=2^{m-1}(2^{n-2}-1)$, $k_{n-2}(B)=2^{m-1}$, and $l(B)=2$.
\item In case (bb) we have $k(B)=2^{m-1}(2^{n-2}+3)$, $k_0(B)=2^{m+1}$, $k_1(B)=2^{m-1}(2^{n-2}-1)$, and $l(B)=1$.
\end{enumerate}
In particular Brauer's $k(B)$-conjecture, Brauer's height zero conjecture and the Alperin-McKay conjecture hold.
\end{Theorem}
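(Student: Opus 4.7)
The plan is to use the subsection sum
\[
k(B) = \sum_{u \in \mathcal{R}} l(b_u)
\]
with $\mathcal{R}$ from Lemma \ref{subrep}, determine each $l(b_u)$ via the fusion system, and then read off the height distribution from the generalized decomposition columns $d(u)$.

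First I classify the representatives. For a non-central $u$ (i.e., $u = x^iz^j$ with $1 \le i \le 2^{n-2}-1$, or $u = yz^j$ in case (ab)) the defect group $\C_D(u) \in \{\langle x,z\rangle,\langle y,z\rangle\}$ is abelian, and the argument from the proof of Lemma \ref{essential} (via the action of odd-order automorphisms on $\Omega(\C_D(u))$) shows that the induced fusion system is trivial; hence $b_u$ is nilpotent and $l(b_u)=1$. For a central $u\in\Z(D)=\langle z\rangle$, the subsection $(u,b_u)$ is major, $b_u$ has defect group $D$ and the same fusion system $\mathcal{F}$, so a standard bijection gives $l(b_u) = l(B)$ with heights preserved. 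In case (bb) the block $B$ itself is nilpotent by Lemma \ref{essential}, yielding $l(B)=1$ and $k(B) = k(D) = 2^{m-1}(2^{n-2}+3)$. In cases (aa) and (ab) the fusion-theoretic data of Lemma \ref{essential} (an $S_3$-action on one or both essential subgroups, or $e(B)=3$ when $n=3$) forces $l(B)=3$ and $l(B)=2$ respectively, via the standard reduction to the Brauer correspondent. Summing $l(b_u)$ over the $2^m$ central and $(2^{n-2}-1)2^{m-1}$ (plus $2^{m-1}$ in case (ab)) non-central representatives then reproduces the claimed value of $k(B)$.

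The main obstacle is the refined height count, since the Cartan-matrix bound for $k(B)$ from the major subsections is not sharp. Following \cite{Olsson}, I would study the columns $d(u)$ for non-central $u \in \mathcal{R}$ together with the integer vectors $a_i^u$ from \eqref{aiuspur}. Lemma \ref{olsson} already provides $k_0(B) \le 2^{m+1}$; combining the inner-product identity $(a_0^x, a_0^x) = 2^{m+1}$ with Lemma \ref{heightzeroodd} (forcing $a_0^x(\chi)$ to be odd at each $\chi$ of height $0$) upgrades this to $k_0(B) = 2^{m+1}$. The remaining characters, of positive height, split between height $1$ and (for $n\ge 4$) height $n-2$: this is extracted by examining $d(xz^j)$ for varying $j$, transporting information via the Galois action $\mathcal{G}$ on subsections, and invoking Lemma \ref{valuation} to track the $2$-adic valuations of the sums $\sum_i a_i^u(\chi)\zeta^i$. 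The height-$(n-2)$ class, which is exclusive to cases (ii) and (iii), is separated off by characters on which a generator of $\Z(D)=\langle z\rangle$ acts by a faithful root of unity, forcing extra valuation in $d(u)(\chi)$.

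Once $k(B)$, $l(B)$ and the heights are determined in all four cases, the three named conjectures follow at once: $k(B) \le |D| = 2^{n+m-1}$ is immediate from the formulae in (i)--(iv), giving Brauer's $k(B)$-conjecture; $k_0(B) < k(B)$ holds because $D$ is non-abelian, giving Brauer's height zero conjecture; and the Alperin--McKay conjecture follows by matching $k_0(B) = 2^{m+1} = |D:D'|$ with the corresponding invariant of the Brauer correspondent of $B$ in $\N_G(D)$, whose defect group is normal.
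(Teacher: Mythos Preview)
Your outline has two genuine gaps.

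First, the treatment of the major subsections is circular. You assert that for central $u\in\langle z\rangle$ one has $l(b_u)=l(B)$ by ``a standard bijection'', and then claim to compute $l(B)$ separately by ``the standard reduction to the Brauer correspondent''. Neither step is available here: there is no Morita equivalence between $B$ and its Brauer correspondent in $\N_G(D)$ at this level of generality, and there is no a priori reason why $l(b_{z^j})=l(B)$. The paper proceeds differently. For $j\ne 0$ the block $b_{z^j}$ dominates a block $\overline{b_{z^j}}$ of $R\C_G(z^j)/\langle z^j\rangle$ with defect group $D/\langle z^j\rangle$, which is of the shape $D_{2^{n-1}}\times C_{2^s}$; the invariants of such blocks are known from \cite{SambaleD2nxC2m}, and the cases (aa)/(ab) are preserved, giving $l(b_{z^j})\in\{2,3\}$ directly. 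This yields only the value of $k(B)-l(B)$. The lower bound $l(B)\ge l(b_z)$ then comes from the Külshammer--Okuyama result on centrally controlled blocks, and a matching \emph{upper} bound on $k(B)$ is needed to pin $l(B)$ down; this upper bound is the hard part of the proof and is absent from your plan.

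Second, your height analysis is aimed at the wrong subsection. You propose to study $d(u)$ for \emph{non-central} $u$ (in particular $d(xz^j)$) and transport information via $\mathcal{G}$. But for such $u$ one has $l(b_u)=1$, and these columns are too coarse to separate the positive heights; the column $d(x)$ was already fully exploited in Lemma~\ref{olsson} and yields only $k_0(B)\le 2^{m+1}$, not equality, and nothing about $k_1$ versus $k_{n-2}$. The paper instead works with the major subsection $(z,b_z)$, where $l(b_z)\in\{2,3\}$. It writes down the Cartan matrix $C^z$ (known from the dominated block), expands each column $d^z_{\chi\phi_i}$ in powers of $\zeta_{2^m}$ to obtain integer vectors $a^i_j$, and computes all scalar products $(a^i_j,a^k_l)$. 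A combinatorial case analysis of the possible shapes of these vectors, combined with the constraint $k_0(B)\le 2^{m+1}$ from Lemma~\ref{olsson}, forces $k(B)$ to hit its lower bound, hence $l(B)=l(b_z)$ and $k_0(B)=2^{m+1}$. The heights $1$ and $n-2$ are then read off from the \emph{contributions} $m^z_{\chi\psi}=2^{n+m-1}\bigl(D^z(C^z)^{-1}\overline{D^z}^{\mathrm T}\bigr)_{\chi\psi}$ via (5G), (5H) of \cite{BrauerBlSec2}; this is where Lemma~\ref{valuation} is actually used, to rule out certain mixed rows of $D^z$.
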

\begin{proof}
Assume first that case (bb) occurs. Then $B$ is nilpotent and $k_i(B)$ is just the number $k_i(D)$ of irreducible characters of $D$ of degree $2^i$ ($i\ge0$) and $l(B)=1$. 
In particular $k_0(B)=|D:D'|=2^{m+1}$ and $k(B)=k(D)=2^{m-1}(2^{n-2}+3)$. Since $|D|$ is the sum of the squares of the degrees of the irreducible characters, we get $k_1(B)=k_1(D)=2^{m-1}(2^{n-2}-1)$.

Now assume that case (aa) or case (ab) occurs.
We determine the numbers $l(b)$ for the subsections in Lemma~\ref{subrep} and apply Theorem~5.9.4 in \cite{Nagao}. Let us begin with the nonmajor subsections. Since $\Aut_{\mathcal{F}}(\langle x,z\rangle)$ is a $2$-group, the block $b_{\langle x,z\rangle}$ with defect group $\langle x,z\rangle$ is nilpotent. Hence, we have $l(b_{x^iz^j})=1$ for all $i=1,\ldots,2^{n-2}-1$ and $j=0,1,\ldots,2^{m-1}-1$. The blocks $b_{yz^j}$ ($j=0,1,\ldots,2^{m-1}-1$) have $Q_1$ as defect group. Since $\N_G(Q_1,b_{Q_1})=\N_D(Q_1)\C_G(Q_1)$, they are also nilpotent, and it follows that $l(b_{yz^j})=1$. 

The major subsections of $B$ are given by $(z^j,b_{z^j})$ for $j=0,1,\ldots,2^m-1$ up to conjugation. By Lemma~\ref{fixedpt} the cases for $B$ and $b_{z^j}$ coincide. As usual, the blocks $b_{z^j}$ dominate blocks $\overline{b_{z^j}}$ of $R\C_G(z^j)/\langle z^j\rangle$ with defect group $D/\langle z^j\rangle\cong D_{2^{n-1}}\times C_{2^m/|\langle z^j\rangle|}$ for $j\ne 0$. By Theorem~5.8.11 in \cite{Nagao} we have $l(b_{z^j})=l(\overline{b_{z^j}})$. With the notations of \cite{SambaleD2nxC2m} the cases for $b_{z^j}$ and $\overline{b_{z^j}}$ also coincide (see Theorem~1.5 in \cite{Olsson}). Now we discuss the cases (ab) and (aa) separately.

\textbf{Case (ab):}\\
Then by Theorem~5.9.4 in \cite{Nagao} we have
\[k(B)-l(B)=2^{m-1}(2^{n-2}-1)+2^{m-1}+2(2^m-1)=2^{m-1}(2^{n-2}+4)-2.\]
Since $B$ is a centrally controlled block, we have $l(B)\ge l(b_{z})=2$ and
$k(B)\ge 2^{m-1}(2^{n-2}+4)$ (see Theorem~1.1 in \cite{KuelshammerOkuyama}). In order to bound $k(B)$ from above we study the numbers $d^z_{\chi\phi}$. Let $D^z:=(d^z_{\chi\phi_i})_{\substack{\chi\in\Irr(B),\\i=1,2}}$. Then $(D^z)^\text{T}\overline{D^z}=C^z$ is the Cartan matrix of $b_z$. Since $\overline{b_z}$ has defect group $D_{2^{n-1}}$, we get
\[C^z=2^m\begin{pmatrix}2^{n-3}+1&2\\2&4\end{pmatrix}\]
up to basic sets (see proof of Theorem~3.15 in \cite{Olsson}). Hence, Lemma~1 in \cite{SambalekB2} implies $k(B)\le 2^{m-1}(2^{n-2}+6)$. In order to derive a sharper bound, we consider the generalized decomposition numbers more carefully.
With a similar notation as above we write
\[d^z_{\chi\phi_i}=\sum_{j=0}^{2^{m-1}-1}{a^i_j(\chi)\zeta^j}\]
for $i=1,2$, where $\zeta$ is a primitive $2^m$-th root of unity. Since the subsections $(z^j,b_{z^j})$ are pairwise non-conjugate for $j=0,\ldots,2^m-1$, we get
\begin{align*}
(a^1_i,a^1_j)&=(2^{n-2}+2)\delta_{ij},&(a^1_i,a^2_j)&=4\delta_{ij},&(a^2_i,a^2_j)=8\delta_{ij}
\end{align*}
as in the proof of Lemma~\ref{olsson}. We introduce the matrix $M^z:=(m^z_{\chi\psi})_{\chi,\psi\in\Irr(B)}=2^{n+m-1}D^z(C^z)^{-1}\overline{D^z}^\text{T}$ of contributions. Then 
\[m^z_{\chi\psi}=4d^z_{\chi\phi_1}\overline{d^z_{\psi\phi_1}}-2(d^z_{\chi\phi_1}\overline{d^z_{\psi\phi_2}}+d^z_{\chi\phi_2}\overline{d^z_{\psi\phi_1}})+(2^{n-3}+1)d^z_{\chi\phi_2}\overline{d^z_{\psi\phi_2}}.\]
It follows from (5G) and (5H) in \cite{BrauerBlSec2} that
\begin{equation}\label{height0}
h(\chi)=0\Longleftrightarrow m^z_{\chi\chi}\in R^\times\Longleftrightarrow d^z_{\chi\phi_2}\in R^\times\Longleftrightarrow\sum_{j=0}^{2^{m-1}-1}a^2_j(\chi)\equiv 1\pmod{2}.
\end{equation}
Assume that $k(B)$ is as large as possible. Since $(z,b_z)$ is a major subsection, no row of $D^z$ vanishes. Hence, for $j\in\{0,1,\ldots,2^{m-1}-1\}$ we have essentially the following possibilities (where $\epsilon_1,\epsilon_2,\epsilon_3,\epsilon_4\in\{\pm1\}$; cf. proof of Theorem~3.15 in \cite{Olsson}):
\begin{align*}
(I)&:\left(\begin{array}{c|cccccccccccccc}
a^1_j&\pm1&\cdots&\pm1&\epsilon_1&\epsilon_2&\epsilon_3&\epsilon_4&.&\cdots&\cdots&\cdots&\cdots&\cdots&.\\[1mm]
a^2_j&.&\cdots&.&\epsilon_1&\epsilon_2&\epsilon_3&\epsilon_4&\pm1&\pm1&\pm1&\pm1&.&\cdots&.
\end{array}\right),\\
(II)&:\left(\begin{array}{c|ccccccccccc}
a^1_j&\pm1&\cdots&\pm1&\epsilon_1&\epsilon_2&\epsilon_3&.&\cdots&\cdots&\cdots&.\\[1mm]
a^2_j&.&\cdots&.&2\epsilon_1&\epsilon_2&\epsilon_3&\pm1&\pm1&.&\cdots&.
\end{array}\right),\\
(III)&:\left(\begin{array}{c|cccccccccccccc}
a^1_j&\pm1&\cdots&\pm1&\epsilon_1&\epsilon_2&.&\cdots&.\\[1mm]
a^2_j&.&\cdots&.&2\epsilon_1&2\epsilon_2&.&\cdots&.
\end{array}\right).
\end{align*}
The number $k(B)$ would be maximal if case (I) occurs for all $j$ and for every character $\chi\in\Irr(B)$ we have $\sum_{j=0}^{2^{m-1}-1}{|a^1_j(\chi)|}\le 1$ and $\sum_{j=0}^{2^{m-1}-1}{|a^2_j(\chi)|}\le 1$. However, this contradicts Lemma~\ref{olsson} and Equation~\eqref{height0}. This explains why we have to take the cases (II) and (III) also into account. Now let $\alpha$ (resp. $\gamma$, $\delta$) be the number of indices $j\in\{0,1,\ldots,2^{m-1}-1\}$ such that case (I) (resp. (II), (III)) occurs for $a^i_j$. Then obviously $\alpha+\beta+\gamma=2^{m-1}$. It is easy to see that we may assume for all $\chi\in\Irr(B)$ that $\sum_{j=0}^{2^{m-1}-1}{|a^1_j(\chi)|}\le 1$ in order to maximize $k(B)$. In contrast to that it does make sense to have $a^2_j(\chi)\ne 0\ne a^2_k(\chi)$ for some $j\ne k$ in order to satisfy Olsson's conjecture in view of Equation~\eqref{height0}. Let $\delta$ be the number of pairs $(\chi,j)\in\Irr(B)\times\{0,1,\ldots,2^{m-1}-1\}$ such that there exists a $k\ne j$ with $a^2_j(\chi)a^2_k(\chi)\ne 0$. Then it follows that 
\begin{align*}
\gamma&=2^{m-1}-\alpha-\beta,\\
k(B)&\le(2^{n-2}+6)\alpha+(2^{n-2}+4)\beta+(2^{n-2}+2)\gamma-\delta/2\\
&=2^{m+n-3}+6\alpha+4\beta+2\gamma-\delta/2\\
&=2^{m+n-3}+2^m+4\alpha+2\beta-\delta/2,\\
8\alpha+4\beta-\delta&\le k_0(B)\le 2^{m+1}.
\end{align*}
This gives $k(B)\le 2^{m+n-3}+2^{m+1}=2^{m-1}(2^{n-2}+4)$. Together with the lower bound above, we have shown that $k(B)=2^{m-1}(2^{n-2}+4)$ and $l(B)=2$. In particular the cases (I), (II) and (III) are really the only possibilities which can occur. The inequalities above imply also $k_0(B)=2^{m+1}$. However we do not know the precise values of $\alpha$, $\beta$, $\gamma$, and $\delta$. We will see in a moment that $\delta=0$. Assume the contrary. If $\chi\in\Irr(B)$ is a character such that $a^2_j(\chi)a^2_k(\chi)\ne 0$ for some $j\ne k$, then it is easy to see that $a^2_j(\chi)a^2_k(\chi)\in\{\pm1\}$ and $a^2_l(\chi)=0$ for all $l\notin\{j,k\}$. For if not, we would have $8\alpha+4\beta-\delta<k_0(B)$ or $k(B)<2^{m+n-3}+2^m+4\alpha+2\beta-\delta/2$. Hence, we have to exclude the following types of rows of $D^z$ (where $\epsilon\in\{\pm1\}$):
$(\epsilon\zeta^j,\epsilon\zeta^j+\epsilon\zeta^k)$, $(\epsilon\zeta^j,\epsilon\zeta^j-\epsilon\zeta^k)$, $(0,\epsilon\zeta^j+\epsilon\zeta^k)$, and $(0,\epsilon\zeta^j-\epsilon\zeta^k)$. 
Let $d^z_{\chi.}$ be the row of $D^z$ corresponding to the character $\chi\in\Irr(B)$.
If $d^z_{\chi.}=(\epsilon\zeta^j,\epsilon\zeta^j+\epsilon\zeta^k)$ for $j\ne k$ we have \[m^z_{\chi\chi}=4-2(2+\zeta^{j-k}+\zeta^{k-j})+(2^{n-3}+1)(2+\zeta^{j-k}+\zeta^{k-j})=4+(2^{n-3}-1)(2+\zeta^{j-k}+\zeta^{k-j}).\]
Since $\nu(\zeta^{j-k}+\zeta^{k-j})=\nu(\zeta^{j-k}(\zeta^{j-k}+\zeta^{k-j}))=\nu(1+\zeta^{2(j-k)})$, Lemma~\ref{valuation} implies $\nu(2+\zeta^{j-k}+\zeta^{k-j})\le 1$. This yields the contradiction $1\le h(\chi)<\nu(m^z_{\chi\chi})\le 1$. A very similar calculation works for the other types of rows. Thus, we have shown $\delta=0$. Then the rows of $D^z$ have the following forms: $(\pm\zeta^j,0)$, $(\epsilon\zeta^j,\epsilon\zeta^j)$, $(0,\pm\zeta^j)$, and $(\epsilon\zeta^j,2\epsilon\zeta^j)$. We already know which of these rows correspond to characters of height $0$. In order to determine $k_i(B)$ we calculate the contributions for the remaining rows. If $d^z_{\chi.}=(\pm\zeta^j,0)$, we have $m^z_{\chi\chi}=4$. Then (5G) in \cite{BrauerBlSec2} implies $h(\chi)=1$. 
The number of these rows is precisely \[(2^{n-2}-2)\alpha+(2^{n-2}-1)\beta+2^{n-2}\gamma=2^{n+m-3}-2\alpha-\beta=2^{n+m-3}-2^{m-1}=2^{m-1}(2^{n-2}-1).\]
Now assume that $\psi\in\Irr(B)$ is a character of height $0$ such that $d^z_{\psi.}=(0,\pm\zeta^j)$ (such characters always exist). Let $\chi\in\Irr(B)$ such that $d^z_{\chi.}=(\epsilon,2\epsilon)$, where $\epsilon\in\{\pm1\}$.
Then $m^z_{\chi\psi}=-2(\pm\epsilon\zeta^{k-j})+(2^{n-3}+1)(\pm\epsilon2\zeta^{k-j})=\pm\epsilon2^{n-2}\zeta^{k-j}$, and (5H) in \cite{BrauerBlSec2} implies $h(\chi)=n-2$. The number of these characters is precisely $k(B)-k_0(B)-2^{m-1}(2^{n-2}-1)=2^{m-1}$. This gives $k_i(B)$ for $i\in\mathbb{N}$ (recall that $n\ge 4$ in case (ab)).

\textbf{Case (aa):}\\
Here the arguments are similar, so that we will leave out some details.
We have
\[k(B)-l(B)=2^{m-1}(2^{n-2}-1)+3(2^m-1)=2^{m-1}(2^{n-2}+5)-3.\]
Again $B$ is centrally controlled, and $l(B)\ge 3$ and $k(B)\ge 2^{m-1}(2^{n-2}+5)$ follow from Theorem~1.1 in \cite{KuelshammerOkuyama}. The Cartan matrix $C^z$ of $b_z$ is given by
\[C^z=2^m\begin{pmatrix}
2^{n-3}+1&1&1\\1&2&0\\1&0&2
\end{pmatrix}\]
up to basic sets (see proof of Theorem~3.17 in \cite{Olsson} and observe that we can remove the negative sign there). Lemma~1 in \cite{SambalekB2} gives the weak bound $k(B)\le 2^{m-1}(2^{n-2}+6)$. We write $\IBr(b_z)=\{\phi_1,\phi_2,\phi_3\}$ and define the integral columns $a^i_j$ for $i=1,2,3$ and $j=0,1,\ldots,2^{m-1}-1$ as above. Then we can calculate the scalar products $(a^i_j,a^k_l)$. In particular the orthogonality relations imply that the columns $a^2_j$ and $a^3_j$ consist of four entries $\pm1$ and zeros elsewhere. The contributions are given by
\begin{align*}
m^z_{\chi\psi}&=4d^z_{\chi\phi_1}\overline{d^z_{\psi\phi_1}}-2\bigl(d^z_{\chi\phi_1}\overline{d^z_{\psi\phi_2}}+d^z_{\chi\phi_2}\overline{d^z_{\psi\phi_1}}+d^z_{\chi\phi_1}\overline{d^z_{\psi\phi_3}}+d^z_{\chi\phi_3}\overline{d^z_{\psi\phi_1}}\bigr)\\
&\mathrel{\phantom{=}}+d^z_{\chi\phi_2}\overline{d^z_{\psi\phi_3}}+d^z_{\chi\phi_3}\overline{d^z_{\psi\phi_2}}+(2^{n-2}+1)\bigl(d^z_{\chi\phi_2}\overline{d^z_{\psi\phi_2}}+d^z_{\chi\phi_3}\overline{d^z_{\psi\phi_3}}\bigr)
\end{align*}
for $\chi,\psi\in\Irr(B)$.
As before (5H) in \cite{BrauerBlSec2} implies
\begin{equation}\label{height0b}
\begin{split}
h(\chi)=0&\Longleftrightarrow m^z_{\chi\chi}\in R^\times\Longleftrightarrow|d^z_{\chi\phi_2}+d^z_{\chi\phi_3}|^2\in R^\times\\
&\Longleftrightarrow d^z_{\chi\phi_2}+d^z_{\chi\phi_3}\in R^\times\Longleftrightarrow\sum_{j=0}^{2^{m-1}-1}{\bigl(a^2_j(\chi)+a^3_j(\chi)\bigr)}\equiv 1\pmod{2}.
\end{split}
\end{equation}
In order to search the maximum value for $k(B)$ (in view of Lemma~\ref{olsson} and Equation~\eqref{height0b}) we have to consider the following possibilities (where $\epsilon_1,\epsilon_2,\epsilon_3,\epsilon_4\in\{\pm1\}$):
\begin{align*}
(I)&:\left(\begin{array}{c|cccccccccccccc}
a^1_j&\pm1&\cdots&\pm1&\epsilon_1&\epsilon_2&\epsilon_3&\epsilon_4&.&\cdots&\cdots&\cdots&\cdots&\cdots&.\\[1mm]
a^2_j&.&\cdots&.&\epsilon_1&\epsilon_2&.&.&\pm1&\pm1&.&\cdots&\cdots&\cdots&.\\[1mm]
a^3_j&.&\cdots&\cdots&\cdots&.&\epsilon_3&\epsilon_4&.&.&\pm1&\pm1&.&\cdots&.
\end{array}\right),\\
(II)&:\left(\begin{array}{c|cccccccccccc}
a^1_j&\pm1&\cdots&\pm1&\epsilon_1&\epsilon_2&\epsilon_3&.&\cdots&\cdots&\cdots&\cdots&.\\[1mm]
a^2_j&.&\cdots&.&\epsilon_1&\epsilon_2&.&\epsilon_4&\pm1&.&\cdots&\cdots&.\\[1mm]
a^3_j&.&\cdots&\cdots&.&\epsilon_2&\epsilon_3&-\epsilon_4&.&\pm1&.&\cdots&.
\end{array}\right),\\
(III)&:\left(\begin{array}{c|cccccccccc}
a^1_j&\pm1&\cdots&\pm1&\epsilon_1&\epsilon_2&.&\cdots&\cdots&\cdots&.\\[1mm]
a^2_j&.&\cdots&.&\epsilon_1&\epsilon_2&\epsilon_3&\epsilon_4&.&\cdots&.\\[1mm]
a^3_j&.&\cdots&.&\epsilon_1&\epsilon_2&-\epsilon_3&-\epsilon_4&.&\cdots&.
\end{array}\right).
\end{align*}
We define $\alpha$, $\beta$ and $\gamma$ as in case (ab).
Then we have $\alpha+\beta+\gamma=2^{m-1}$. Let $\delta$ be the number of triples $(\chi,i,j)\in\Irr(B)\times\{2,3\}\times\{0,1,\ldots,2^{m-1}-1\}$ such that there exists a $k\ne j$ with $a^i_j(\chi)a^2_k(\chi)\ne 0$ or $a^i_j(\chi)a^3_k(\chi)\ne 0$. Then the following holds:
\begin{align*}
\gamma&=2^{m-1}-\alpha-\beta,\\
k(B)&\le(2^{n-2}+6)\alpha+(2^{n-2}+5)\beta+(2^{n-2}+4)\gamma-\delta/2\\
&=2^{n+m-3}+2^{m+1}+2\alpha+\beta-\delta/2,\\
8\alpha+4\beta-\delta&\le k_0(B)\le2^{m+1}.
\end{align*}
This gives $k(B)\le 2^{n+m-3}+2^{m+1}+2^{m-1}=2^{m-1}(2^{n-2}+5)$. Together with the lower bound we have shown that $k(B)=2^{m-1}(2^{n-2}+5)$, $k_0(B)=2^{m+1}$, and $l(B)=3$. In particular the maximal value for $k(B)$ is indeed attended. Moreover, $\delta=0$. Let $\chi\in\Irr(B)$ such that $d^z_{\chi.}=(\pm\zeta^j,0,0)$. Then $m^z_{\chi\chi}=4$ and $h(\chi)=1$ by (5G) in \cite{BrauerBlSec2}. The number of these characters is 
\[(2^{n-2}-2)\alpha+(2^{n-2}-1)\beta+2^{n-2}\gamma=2^{n+m-1}-2^{m-1}=2^{m-1}(2^{n-2}-1).\]
Now let $\psi\in\Irr(B)$ a character of height $0$ such that $d^z_{\psi.}=(0,0,\pm\zeta^j)$, and let $\chi\in\Irr(B)$ such that $d^z_{\chi.}=(\epsilon\zeta^k,\epsilon\zeta^k,\epsilon\zeta^k)$, where $\epsilon\in\{\pm1\}$. Then we have $m^z_{\chi\psi}=-2(\pm\epsilon\zeta^{k-j})\pm\epsilon\zeta^{k-j}+(2^{n-2}+1)(\pm\epsilon\zeta^{k-j})=\pm\epsilon2^{n-2}\zeta^{k-j}$ and $h(\chi)=n-2$. The same holds if $d^z_{\chi.}=(0,\epsilon\zeta^k,-\epsilon\zeta^k)$. This gives the numbers $k_i(B)$ for $i\in\mathbb{N}$. Observe that we have to add $k_1(B)$ and $k_{n-2}(B)$ in case $n=3$.
\end{proof}

We add some remarks. It is easy to see that also Eaton's conjecture is satisfied which provides a generalization of Brauer's $k(B)$-conjecture and Olsson's conjecture (see \cite{Eaton}). Brauer's $k(B)$-conjecture already follows from Theorem~2 in \cite{SambalekB}. If we take $m=1$ in the formulas for $k_i(B)$ and $l(B)$ we get exactly the invariants for the defect group $Q_{2^n}$ (see \cite{Olsson}). However, recall that $D_{2^n}\ast C_2\cong D_{2^n}$. The principal block of $D$ gives an example for case (bb). For $n=3$ the principal block of $D\rtimes C_3$ gives an example for case (aa). If $n=4$, the principal blocks of $\SL(2,7)\ast C_{2^m}$ and $\texttt{SmallGroup(48,28)}\ast C_{2^m}$ show that also the cases (aa) and (ab) can occur (this can be seen with GAP).\nocite{GAP4}

\section{Alperin's weight conjecture}

In this section we will prove Alperin's weight conjecture using Proposition~5.4 in \cite{Kessar}.

\begin{Theorem}
Alperin's weight conjecture holds for $B$.
\end{Theorem}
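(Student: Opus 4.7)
The plan is to apply Proposition~5.4 of \cite{Kessar}, which for a block with fusion system $\mathcal{F}$ reformulates Alperin's weight conjecture as
\[
l(B) = \sum_{Q} z\bigl(F_{\alpha_Q} \Out_{\mathcal{F}}(Q)\bigr),
\]
the sum taken over representatives of the $\mathcal{F}$-conjugacy classes of $\mathcal{F}$-centric, $\mathcal{F}$-radical subgroups $Q\le D$, where $z$ counts simple projective modules and $\alpha_Q$ is the Külshammer-Puig cocycle on $\Out_{\mathcal{F}}(Q)$. By Lemma~\ref{essential} the only candidates for $Q$ are $D$, $Q_1$ and $Q_2$, so the sum is short in each case.

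I would then treat the cases of Lemma~\ref{essential} in turn. In case (bb), both $Q_1$ and $Q_2$ have $2$-group outer automorphism groups and are therefore not $\mathcal{F}$-radical, so only $Q=D$ with $\Out_{\mathcal{F}}(D)=1$ contributes, giving $z(F)=1=l(B)$. In case (ab), $Q_1$ is again not radical, while $D$ and $Q_2$ contribute $z(F)=1$ and $z(F_{\alpha_{Q_2}}S_3)$ respectively; the latter equals $1$ because $\cohom^2(S_3,F^\times)=1$ forces the twist to split, and $FS_3$ has a unique simple projective module (the $2$-dimensional reduction of the standard representation of $S_3$), yielding the total $2=l(B)$. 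In case (aa) with $n\ge 4$, all three of $D$, $Q_1$, $Q_2$ are $\mathcal{F}$-radical and each contributes $1$, summing to $3=l(B)$. Finally, in case (aa) with $n=3$ we have $Q_1=Q_2=D$, so the sum has a single contribution from $D$ equal to $z(FC_3)=3$ (since $\Out_{\mathcal{F}}(D)\cong C_3$ is a $2'$-group and $FC_3$ is semisimple with three one-dimensional simples, all of $2$-defect zero), matching $l(B)=3$.

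The main obstacle is verifying the triviality of the Külshammer-Puig class $\alpha_{Q_i}$ on $S_3$, but this follows at once from $\cohom^2(S_3,F^\times)=1$. Beyond the reduction provided by Kessar's reformulation and the fusion-system analysis of Lemma~\ref{essential}, no essentially new input is required.
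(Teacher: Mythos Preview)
Your proposal is correct and follows essentially the same route as the paper: both apply Proposition~5.4 of \cite{Kessar}, use Lemma~\ref{essential} to list the $\mathcal{F}$-centric $\mathcal{F}$-radical subgroups, observe that the relevant $\Out_{\mathcal{F}}(Q)$ (namely $1$, $C_3$, or $S_3$) have trivial Schur multiplier so the Külshammer--Puig twist is trivial, and then count defect-zero blocks of $F1$, $FC_3$, $FS_3$ to match the values of $l(B)$ from Theorem~\ref{main}. The paper compresses your case analysis into a single sentence, but the content is the same.
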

\begin{proof}
Let $Q\le D$ be $\mathcal{F}$-centric and $\mathcal{F}$-radical. By Lemma~\ref{essential} we have $\Out_{\mathcal{F}}(Q)\cong S_3$, $\Out_{\mathcal{F}}(Q)\cong C_3$, or $\Out_{\mathcal{F}}(Q)=1$ (in the last two cases we have $Q=D$). In particular $\Out_{\mathcal{F}}(Q)$ has trivial Schur multiplier. Moreover, the group algebras $F1$ and $FS_3$ have precisely one block of defect $0$, while $FC_3$ has three blocks of defect $0$. Now the claim follows from Theorem~\ref{main} and Proposition~5.4 in \cite{Kessar}.
\end{proof}

\section{Ordinary weight conjecture}
In this section we prove Robinson's ordinary weight conjecture (OWC) for $B$ (see \cite{OWC}). If OWC holds for all groups and all blocks, then also Alperin's weight conjecture holds. However, for our particular block $B$ this implication is not known. In the same sense OWC is equivalent to Dade's projective conjecture (see \cite{Eaton}). 
For $\chi\in\Irr(B)$ let $d(\chi):=n+m-1-h(\chi)$ be the \emph{defect} of $\chi$. We set $k^i(B)=|\{\chi\in\Irr(B):d(\chi)=i\}|$ for $i\in\mathbb{N}$. 

\begin{Lemma}\label{chartable}
Let $\zeta$ be a primitive $2^m$-th root of unity. Then for $n=3$ the (ordinary) character table of $D$ is given as follows:
\[
\begin{tabular}{c|ccc}
1&$x$&$y$&$z$\\[0.6mm]\hline\\[-3.5mm]
1&1&1&$\zeta^{2r}$\\[1mm]
1&-1&1&$\zeta^{2r}$\\[1mm]
1&1&-1&$\zeta^{2r}$\\[1mm]
1&-1&-1&$\zeta^{2r}$\\[1mm]
2&0&$0$&$\zeta^{2r+1}$
\end{tabular}\]
where $r=0,1,\ldots,2^{m-1}-1$.
\end{Lemma}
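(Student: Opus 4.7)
The plan is to exploit the central product decomposition $D = D_8 \ast C_{2^m}$, with amalgamated central subgroup $Z := \langle x^2\rangle = \langle z^{2^{m-1}}\rangle$ of order $2$. The irreducible characters of such a central product are in natural bijection with the pairs $(\chi_1,\chi_2)\in\Irr(D_8)\times\Irr(\langle z\rangle)$ whose restrictions to $Z$ agree; the corresponding character of $D$ is given by $\chi(ab) = \chi_1(a)\chi_2(b)$ for $a\in D_8$ and $b\in\langle z\rangle$.

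First I would recall that $\Irr(D_8)$ consists of four linear characters, each trivial on $\langle x^2\rangle$ and determined by arbitrary choices of $\chi_1(x),\chi_1(y)\in\{\pm1\}$, together with a single faithful irreducible character of degree $2$ that vanishes on $x$ and $y$ and whose central character sends $x^2$ to $-1$. The linear characters of $\langle z\rangle$ are the $\psi_s$ with $\psi_s(z) = \zeta^s$ for $s=0,\ldots,2^m-1$; the key point is that $\psi_s(z^{2^{m-1}}) = \zeta^{s\cdot 2^{m-1}}$ equals $+1$ when $s$ is even and $-1$ when $s$ is odd.

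Combining these observations, the compatibility condition on $Z$ forces the following pairings. A linear character of $D_8$ can only be paired with $\psi_s$ for $s$ even, say $s = 2r$ with $r\in\{0,1,\ldots,2^{m-1}-1\}$, which yields $4\cdot 2^{m-1} = 2^{m+1}$ linear characters of $D$ corresponding to the first four rows of the claimed table. The degree-$2$ character of $D_8$ can only be paired with $\psi_s$ for $s$ odd, say $s = 2r+1$, yielding $2^{m-1}$ characters of degree $2$ corresponding to the last row. The total count $5\cdot 2^{m-1}$ agrees with the number of conjugacy classes of $D$ (computed directly from the normal form $x^iy^kz^j$ with $i,k\in\{0,1\}$, or equivalently with $k(D)$ obtained from case (bb) of Theorem~\ref{main} specialized to $n=3$), so no characters are missed.

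The remaining step is routine: the character values displayed in the table follow immediately from the product formula $\chi(ab) = \chi_1(a)\chi_2(b)$, using $\chi_1(x),\chi_1(y)\in\{\pm1\}$ for the linear $\chi_1$ and $\chi_1(x)=\chi_1(y)=0$ for the degree-$2$ one. There is no substantive obstacle here beyond the central-character bookkeeping on $Z$ carried out above; that is precisely what keeps the two parity classes of $s$ separate and produces the $\zeta^{2r}$ versus $\zeta^{2r+1}$ dichotomy visible in the $z$-column of the table.
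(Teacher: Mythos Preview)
Your argument is correct and essentially the same as the paper's: the paper's one-line proof simply takes the characters of $D_8\times C_{2^m}$ that are trivial on the amalgamated subgroup $\langle x^2z^{2^{m-1}}\rangle$, which is the quotient formulation of exactly the central-product compatibility condition you spell out.
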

\begin{proof}
We just take the characters $\chi\in\Irr(D_8\times C_{2^m})$ with $\chi(x^2z^{2^{m-1}})=\chi(1)$.
\end{proof}

\begin{Theorem}
The ordinary weight conjecture holds for $B$.
\end{Theorem}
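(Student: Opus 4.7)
The plan is to verify OWC by a direct local-global computation, mirroring the strategy used in the preceding theorem on Alperin's weight conjecture. Robinson's OWC expresses $k^d(B)$ as a sum $w(B,d) = \sum_Q w(B,d,Q)$ of local contributions, one for each $\mathcal{F}$-conjugacy class of $\mathcal{F}$-centric subgroups $Q$ of $D$; the term $w(B,d,Q)$ depends only on $\Out_{\mathcal{F}}(Q)$, its Schur multiplier, and the action of $\Out_{\mathcal{F}}(Q)$ on $\Irr(Q)$. By Lemma~\ref{essential} the only $\mathcal{F}$-centric radical subgroups of $D$ are (up to conjugation) $D$ and possibly $Q_1$, $Q_2$, so by the usual chain-cancellation argument the sum reduces to these few subgroups. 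Moreover $\Out_{\mathcal{F}}(Q)$ is always one of $1$, $C_3$, $S_3$, each with trivial Schur multiplier; hence the Schur-multiplier twist in the definition of $w(B,d,Q)$ is trivial, and $w(B,d,Q)$ is simply a count of $\Out_{\mathcal{F}}(Q)$-orbits on characters of $Q$ of defect $d$, weighted by the number of $2$-defect-zero blocks of the group algebra $F[\Out_{\mathcal{F}}(Q)_\chi]$ of the orbit stabilizer.

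The computational input I would need is the character table of $D \cong D_{2^n}\ast C_{2^m}$ (an immediate extension of Lemma~\ref{chartable} from $n=3$ to arbitrary $n \ge 3$), and the character tables of $Q_1$, $Q_2$, which are given by Lemma~\ref{chartable} itself. In both cases the characters are naturally indexed by the central character $\zeta^r$ with $r = 0, 1, \ldots, 2^m-1$. Using Lemma~\ref{fixedpt}, the $S_3$-action of $\Out_{\mathcal{F}}(Q)$ (when nontrivial) fixes every central character pointwise; it fixes the unique linear character of $Q$ with a given central value that is trivial on $Q/\Z(Q)$, permutes the three nontrivial such linear characters transitively, and fixes the unique faithful degree-$2$ character with that central value. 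Analogous statements describe the action on $\Irr(D)$ in case $n = 3$. This determines $w(B,d,Q)$ for each $\mathcal{F}$-centric radical $Q$ and each defect $d$.

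With the local data in hand, the proof splits into the four cases of Lemma~\ref{essential}. In case (bb) the block $B$ is nilpotent, only $Q = D$ contributes with trivial $\Out_{\mathcal{F}}(D)$, and OWC reduces to the identity $k^d(B) = k^d(D)$ which is immediate. In cases (aa) and (ab) I would combine the contributions from $D$, $Q_1$, $Q_2$ (recalling $Q_1 = Q_2 = D$ when $n = 3$), parameter by parameter in $r$, and compare the totals, stratified by defect $d \in \{m+1, n+m-2, n+m-1\}$, with the values $k_i(B) = k^{n+m-1-i}(B)$ read off from Theorem~\ref{main}. The main obstacle is not conceptual but combinatorial: keeping the bookkeeping consistent across the four cases, correctly separating the characters of $D$ and of the $Q_i$ by central character and degree, and verifying numerically that the local counts add up to the explicit global numbers of Theorem~\ref{main}. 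Once this is done, Proposition~5.4 in \cite{Kessar} (or the direct OWC formulation from \cite{OWC}) delivers the conclusion.
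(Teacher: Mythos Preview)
Your overall strategy---compute the local weight $\textbf{w}(Q,d)$ for each $\mathcal{F}$-centric $\mathcal{F}$-radical $Q$ and compare against Theorem~\ref{main}---is exactly what the paper does. The gap is in your claimed simplification of $\textbf{w}(Q,d)$. The quantity $\textbf{w}(Q,d)$ is an \emph{alternating sum} over $\Out_{\mathcal{F}}(Q)$-conjugacy classes of chains $\sigma$ of $2$-subgroups of $\Out_{\mathcal{F}}(Q)$, and for each chain one sums $z\bigl(F[I(\sigma,\mu)]\bigr)$ over $I(\sigma)$-orbits of characters $\mu$ of defect $d$. Your formula ``count $\Out_{\mathcal{F}}(Q)$-orbits, weighting each by $z\bigl(F[\Out_{\mathcal{F}}(Q)_\chi]\bigr)$'' only agrees with this when $\Out_{\mathcal{F}}(Q)$ is a $2'$-group (so that the trivial chain is the only chain). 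That is fine for $Q=D$, where $\Out_{\mathcal{F}}(D)\in\{1,C_3\}$, but it fails for $Q\in\{Q_1,Q_2\}$ with $\Out_{\mathcal{F}}(Q)\cong S_3$.

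Concretely, take $n\ge 4$, case~(aa), $Q=Q_1$, and $d=m+2$ (the linear characters of $Q$). For each of the $2^{m-1}$ central values there is one $S_3$-fixed linear character and one $S_3$-orbit of length $3$ with stabiliser $C_2$. Your formula gives $2^{m-1}\bigl(z(FS_3)+z(FC_2)\bigr)=2^{m-1}(1+0)=2^{m-1}$. The correct computation (as in the paper) uses both chains $\sigma\colon 1$ and $\tau\colon 1<C_2$: the trivial chain contributes $+2^{m-1}$ exactly as you say, but under $I(\tau)=C_2$ each length-$3$ orbit breaks into a fixed point (contributing $z(FC_2)=0$) and a $2$-orbit (contributing $z(F1)=1$), so $\tau$ contributes $-2^{m-1}$, and $\textbf{w}(Q,m+2)=0$. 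With your value $2^{m-1}$ the totals no longer match Theorem~\ref{main}: for $n\ge 5$ you would predict $k^{m+2}(B)=2^m$ rather than $0$, and for $n=4$ you would get $k_1(B)=5\cdot 2^{m-1}$ rather than $3\cdot 2^{m-1}$. The fix is simply to carry out the two-chain alternating sum for $Q_1$ and $Q_2$; once that is done, your bookkeeping plan goes through and matches the paper's argument.
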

\begin{proof}
We prove the version in Conjecture~6.5 in \cite{Kessar}. We may assume that $B$ is not nilpotent, and thus case (bb) does not occur. Suppose that $n=3$ and case (aa) occurs. Then $D$ is the only $\mathcal{F}$-centric, $\mathcal{F}$-radical subgroup of $D$. Since $\Out_{\mathcal{F}}(D)\cong C_3$, the set $\mathcal{N}_D$ consists only of the trivial chain (with the notations of \cite{Kessar}). We have $\textbf{w}(D,d)=0$ for $d\notin\{m+1,m+2\}$, since then $k^d(Q)=0$. For $d=m+1$ we get $\textbf{w}(D,d)=3\cdot 2^{m-1}$ by Lemma~\ref{chartable}. In case $d=m+2$ it follows that $\textbf{w}(D,d)=3\cdot 2^{m-1}+2^{m-1}=2^{m+1}$. Hence, OWC follows from Theorem~\ref{main}.

Now let $n\ge 4$ and assume that case (aa) occurs. Then there are three $\mathcal{F}$-centric, $\mathcal{F}$-radical subgroups up to conjugation: $Q_1$, $Q_2$ and $D$. Since $\Out_{\mathcal{F}}(D)=1$, it follows easily that $\textbf{w}(D,d)=k^d(D)$ for all $d\in\mathbb{N}$. By Theorem~\ref{main} it suffices to show
\[\textbf{w}(Q,d)=\begin{cases} 2^{m-1}&\text{if }d=m+1\\0&\text{otherwise}\end{cases}\]
for $Q\in\{Q_1,Q_2\}$, because $k^{m+1}(B)=k_{n-2}(B)=2^m$. We already have $\textbf{w}(Q,d)=0$ unless $d\in\{m+1,m+2\}$. W.\,l.\,o.\,g. let $Q=Q_1$.

Let $d=m+1$. Up to conjugation $\mathcal{N}_Q$ consists of the trivial chain $\sigma:1$ and the chain $\tau:1<C$, where $C\le\Out_{\mathcal{F}}(Q)$ has order $2$. We consider the chain $\sigma$ first. Here $I(\sigma)=\Out_{\mathcal{F}}(Q)\cong S_3$ acts trivially on the characters of $D$ or defect $m+1$ by Lemma~\ref{chartable}. This contributes $2^{m-1}$ to the alternating sum of $\textbf{w}(Q,d)$. Now consider the chain $\tau$. Here $I(\tau)=C$ and $z(FC)=0$ (notation from \cite{Kessar}). Hence, the contribution of $\tau$ vanishes and we get $\textbf{w}(Q,d)=2^{m-1}$ as desired.

Let $d=m+2$. Then we have $I(\sigma,\mu)\cong S_3$ for every character $\mu\in\Irr(Q)$ with $\mu(x^{2^{n-3}})=\mu(y)=1$. For the other characters of $Q$ with defect $d$ we have $I(\sigma,\mu)\cong C_2$. Hence, the chain $\sigma$ contributes $2^{m-1}$ to the alternating sum. There are $2^m$ characters $\mu\in\Irr(D)$ which are not fixed under $I(\tau)=C$. Hence, they split in $2^{m-1}$ orbits of length $2$. For these characters we have $I(\tau,\mu)=1$. For the other irreducible characters $\mu$ of $D$ of defect $d$ we have $I(\tau,\mu)=C$. Thus, the contribution of $\tau$ to the alternating sum is $-2^{m-1}$. This shows $\textbf{w}(Q,d)=0$.

In case (ab) we have only two $\mathcal{F}$-centric, $\mathcal{F}$-radical subgroups: $Q_2$ and $D$. Since $k_{n-2}(B)=2^{m-1}$ in this case, the calculations above imply the result. 
\end{proof}

\section{The gluing problem}
Finally we show that the gluing problem (see Conjecture~4.2 in \cite{gluingprob}) for the block $B$ has a unique solution. 
We will not recall the very technical statement of the gluing problem. Instead we refer to \cite{Parkgluing} for most of the  notations. Observe that the field $F$ is denoted by $k$ in \cite{Parkgluing}.

\begin{Theorem}
The gluing problem for $B$ has a unique solution.
\end{Theorem}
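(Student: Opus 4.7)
The plan is to invoke the cohomological reformulation of the gluing problem from \cite{Parkgluing}, which expresses the set of solutions as a cohomology group of the $\mathcal{F}$-centric orbit category with coefficients in a functor built pointwise from $\cohom^2(\Out_{\mathcal{F}}(Q); F^\times)$. In particular, if this coefficient functor vanishes identically on $\mathcal{F}$-centric radical subgroups, then the higher limit is automatically zero, and the gluing problem has a unique (trivial) solution.

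First, I would use Lemma~\ref{essential} to enumerate the $\mathcal{F}$-centric, $\mathcal{F}$-radical subgroups up to conjugation; they lie in $\{Q_1, Q_2, D\}$, and the same lemma gives $\Out_{\mathcal{F}}(Q) \in \{1, C_3, S_3\}$ for each such $Q$. Non-radical $\mathcal{F}$-centric subgroups do not affect the relevant higher limit by standard acyclicity arguments, so only these three (or fewer, depending on which of the cases (aa), (ab), (bb) occurs) need to be examined.

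Next, I would invoke the observation already used in the proof of Alperin's weight conjecture that each of $1$, $C_3$ and $S_3$ has trivial Schur multiplier. Since $F^\times$ is a divisible abelian group, this forces $\cohom^2(\Out_{\mathcal{F}}(Q); F^\times)=0$ for every $\mathcal{F}$-centric radical $Q$, so the coefficient functor is identically zero. Therefore the relevant cohomology of the orbit category vanishes, and Park's criterion yields a unique solution.

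The only genuine input beyond quoting the framework of \cite{Parkgluing} and Lemma~\ref{essential} is the Schur multiplier computation, which is elementary. Thus the main obstacle is purely bibliographical, namely matching Park's notation to the data we have already extracted about $\Out_{\mathcal{F}}(Q)$; there is no substantive computational difficulty.
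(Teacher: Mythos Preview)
Your proposal has a genuine gap: it misidentifies what Park's criterion actually requires. Theorem~1.1 in \cite{Parkgluing} does not parametrize the solutions by a cohomology group whose coefficients are built from $\cohom^2(\Out_{\mathcal{F}}(Q),F^\times)$ alone. One must verify both $\cohom^0([S(\mathcal{F}^c)],\mathcal{A}^2_{\mathcal{F}})=0$ and $\cohom^1([S(\mathcal{F}^c)],\mathcal{A}^1_{\mathcal{F}})=0$, where $\mathcal{A}^i_{\mathcal{F}}(\sigma)=\cohom^i(\Aut_{\mathcal{F}}(\sigma),F^\times)$ is defined on chains $\sigma$ of $\mathcal{F}$-centric subgroups. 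Your Schur-multiplier observation (trivial multiplier for $1$, $C_3$, $S_3$) correctly kills the $\mathcal{A}^2$ piece, but it says nothing about $\mathcal{A}^1$, and it is $\cohom^1$ with coefficients in $\mathcal{A}^1$ that controls uniqueness.

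This omission is not harmless. In case (aa) with $n=3$ one has $\Out_{\mathcal{F}}(D)\cong C_3$ (equivalently $\Aut_{\mathcal{F}}(D)\cong A_4$), and since $F$ is algebraically closed of characteristic $2$ we get $\cohom^1(A_4,F^\times)\cong\cohom^1(C_3,F^\times)\cong C_3\neq 0$. Thus $\mathcal{A}^1_{\mathcal{F}}$ is \emph{not} the zero functor, and the vanishing of $\cohom^1([S(\mathcal{F}^c)],\mathcal{A}^1_{\mathcal{F}})$ cannot be read off from pointwise data; an actual computation is needed. The paper carries this out by listing the three chains $Q=\langle x,z\rangle$, $D$, and $Q<D$, realizing $\mathcal{A}^1_{\mathcal{F}}$ as a module $\mathcal{M}\cong C_3$ over the incidence algebra of $[S(\mathcal{F}^c)]$, and checking directly that every derivation $\Hom[S(\mathcal{F}^c)]\to\mathcal{M}$ is zero. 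Without this step (or an equivalent argument) your proof does not establish uniqueness.
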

\begin{proof}
Assume first that $n\ge 4$.
Let $\sigma$ be a chain of $\mathcal{F}$-centric subgroups of $D$, and let $Q\le D$ be the largest subgroup occurring in $\sigma$. Then as in the proof of Lemma~\ref{essential} we have $Q=(Q\cap\langle x,y\rangle)\ast\langle z\rangle$. If $Q\cap\langle x,y\rangle$ is abelian or $Q=D$, then $\Aut_{\mathcal{F}}(Q)$ and $\Aut_{\mathcal{F}}(\sigma)$ are $2$-groups. In this case we get $\cohom^i(\Aut_{\mathcal{F}}(\sigma),F^\times)=0$ for $i=1,2$ (see proof of Corollary~2.2 in \cite{Parkgluing}). 
Now assume that $Q\in\{Q_1,Q_2\}$ and $\Aut_{\mathcal{F}}(Q)\cong S_4$. Then it is easy to see that $Q$ does not contain a proper $\mathcal{F}$-centric subgroup. Hence, $\sigma$ consists only of $Q$ and $\Aut_{\mathcal{F}}(\sigma)=\Aut_{\mathcal{F}}(Q)$. Thus, also in this case we get $\cohom^i(\Aut_{\mathcal{F}}(\sigma),F^\times)=0$ for $i=1,2$. It follows that $\mathcal{A}_{\mathcal{F}}^i=0$ and $\cohom^0([S(\mathcal{F}^c)],\mathcal{A}^2_{\mathcal{F}})=\cohom^1([S(\mathcal{F}^c)],\mathcal{A}^1_{\mathcal{F}})=0$. Hence, by Theorem~1.1 in \cite{Parkgluing} the gluing problem has only the trivial solution.

Now let $n=3$. Then we have $\cohom^i(\Aut_{\mathcal{F}}(\sigma),F^\times)=0$ for $i=1,2$ unless $\sigma=D$ and case (aa) occurs. In this case $\Aut_{\mathcal{F}}(\sigma)=\Aut_{\mathcal{F}}(D)\cong A_4$. Here $\cohom^2(\Aut_{\mathcal{F}}(\sigma),F^\times)=0$, but $\cohom^1(\Aut_{\mathcal{F}}(\sigma),F^\times)\cong\cohom^1(A_4,F^\times)\cong\cohom^1(C_3,F^\times)\cong C_3$. 
Hence, we have to consider the situation more closely. Up to conjugation there are three chains of $\mathcal{F}$-centric subgroups: $Q:=\langle x,z\rangle$, $D$, and $Q<D$. 
Since $[S(\mathcal{F}^c)]$ is partially ordered by taking subchains, one can view $[S(\mathcal{F}^c)]$ as a category, where the morphisms are given by the pairs of ordered chains. In our case $[S(\mathcal{F}^c)]$ has precisely five morphisms. With the notations of \cite{Webb} the functor $\mathcal{A}_{\mathcal{F}}^1$ is a \emph{representation} of $[S(\mathcal{F}^c)]$ over $\mathbb{Z}$. Hence, we can view $\mathcal{A}_{\mathcal{F}}^1$ as a module $\mathcal{M}$ over the incidence algebra of $[S(\mathcal{F}^c)]$. More precisely, we have
\[\mathcal{M}:=\bigoplus_{a\in\Ob[S(\mathcal{F}^c)]}{\mathcal{A}_{\mathcal{F}}^1(a)}=\mathcal{A}_{\mathcal{F}}^1(D)\cong C_3.\]
Now we can determine $\cohom^1([S(\mathcal{F}^c)],\mathcal{A}_{\mathcal{F}}^1)$ using Lemma~6.2(2) in \cite{Webb}. For this let $d:\Hom[S(\mathcal{F}^c)]\to\mathcal{M}$ a derivation. Then we have $d(\alpha)=0$ for all $\alpha\in\Hom[S(\mathcal{F}^c)]$ with $\alpha\ne(D,D)=:\alpha_1$. Moreover, \[d(\alpha_1)=d(\alpha_1\alpha_1)=(\mathcal{A}_{\mathcal{F}}^1(\alpha_1))(d(\alpha_1))+d(\alpha_1)=2d(\alpha_1)=0.\] 
Hence, $\cohom^1([S(\mathcal{F}^c)],\mathcal{A}^1_{\mathcal{F}})=0$.
\end{proof}


\section*{Acknowledgment}
This work was partly supported by the “Deutsche Forschungsgemeinschaft”.

\end{document}